\documentclass[reqno]{amsart}
\usepackage{csquotes}
\usepackage{amsfonts,amsmath,amsthm,amssymb}
\usepackage[normalem]{ulem}
\usepackage{enumerate,nicefrac}
\usepackage{hyperref,multicol,ulem}
\usepackage{rotating}
\usepackage{mathrsfs}
\usepackage[all]{xy}
\usepackage{tikz}
\usetikzlibrary{arrows, decorations.markings, shapes.geometric}

\tikzstyle{steps} = [rectangle, rounded corners, text width=3cm, minimum height=1.5cm,text centered, draw=black,ultra thick]
\tikzstyle{mid} = [draw=none,text centered]
\tikzstyle{endbox} = [ellipse, minimum width=3cm, text centered, draw=black,very thick]

\usepackage{mathtools}

\newtheorem{theorem}{Theorem}[section]
\newtheorem{lemma}[theorem]{Lemma}

\theoremstyle{definition}

\usepackage[ansinew]{inputenc}
\usepackage{ifthen}
\usepackage{animate}

\title[Additive Diophantine Equations]{Additive Diophantine Equations involving $S$-Units, Factorials and Ternary
Recurrences with repeated root}

\author{Vikas Godara}
\email{vikasgodara0529\symbol{64}gmail.com}
\author{Divyum Sharma}
\email{divyum.sharma\symbol{64}pilani.bits-pilani.ac.in}

\address{Department of Mathematics\\
Birla~Inst{i}tute~of~Technology~and~Science, Pilani 333\,031 \textsc{India}}

\begin{document}

\date{}
\subjclass[2020]{11D61, 11J86, 11D72, 11B37}
\keywords{$S$-units, Greatest prime factor, Diophantine equations, Linear forms in logarithms}
 \begin{abstract}
    Let $C_n=n2^n+1$ denote the $n$th Cullen number. There has been recent interest in finding all  Cullen numbers having a given Diophantine property.
    We prove that, for a fixed integer $k$ and bounded integers $a_1,\ldots,a_k$, the greatest prime divisor of 
    $C_n-a_1m_1!-\cdots-a_km_k!$ tends to infinity, in an effective way. We prove this for some more general families of ternary recurrence sequences as well. We also solve the Diophantine equation 
    \[
    C_n = m_1! + m_2! + s,
    \]
    where $s$ is a positive integer composed of primes $2,3,5,7$.
 \end{abstract}

\maketitle
%===============================
\section{Introduction}\label{sec_intro}

The only Fibonacci numbers that are also factorials are $1$ and $2$. This follows from Carmichael's primitive divisor theorem (see \cite{Rajagopal_Griffiths_Fibonacci_factorial} for a proof), and falls into the class of results that consider whether the intersection of the Fibonacci sequence with a specified infinite set is infinite. In 2002, Grossman and Luca \cite{GrossLucaSumfact} considered a more general setting and established that for a given non-degenerate binary recurrence sequence $\{u_n\}_{n\geq0}$ and fixed $k$, there are only finitely many (effectively computable) indices $n$ such that $u_n$ can be represented as a sum of $k$ factorials. In particular, they found all Fibonacci numbers expressible as the sum of two factorials. It turns out that $F_{12} = 5! + 4!$ is the largest solution. (Here, $F_n$ denotes the $n$th Fibonacci number, $F_1=F_2=1$, $F_{n+1}=F_n+F_{n-1}$ for $n\geq 2$.) In 2010, Bollman, Santos Hern{\'a}ndez and Luca \cite{BHL10} proved that $13 = 1! + 3! + 3!$ is the largest
Fibonacci number expressible as a sum of three factorials. Luca and Siksek \cite{Luca_Siksek_2010} treated the mirrored equation and determined all factorials that are the sum of at most three Fibonacci numbers.

In 2014, Sanchez and Luca \cite{SaLu14} considered the problem of representing members of a non-degenerate binary recurrence sequence $\{u_n\}_{n\geq0}$ as a linear combination of a factorial and an $S$-unit with bounded coefficients. For a finite set $\mathcal{P}=\{ p_1,p_2,\ldots,p_l=P\}$ of primes labelled increasingly, we let $\mathcal{S}_{\mathcal{P}}$ denote the set of all positive integers whose prime factors are in $\mathcal{P}$. Sanchez and Luca solved the equation
    \[
    F_n=\pm m!\pm s,\, s\in \mathcal{S}_{\mathcal{P}},
    \]
   % \textcolor{red}{Also, $P=\ \max\{p_1,p_2,\ldots,P\}$.} 
where $\mathcal{P}=\{2,3,5,7\}$ and found $F_{24} = 8! + 2^5\cdot3^3\cdot7$ to be the largest solution. 
In 2023, Luca and Noubissie \cite{LucaNoubissieCombinationfact} considered a variant of this problem with 
$\{u_n\}_{n\geq0}$ a non-degenerate ternary recurrence sequence whose characteristic polynomial has a repeated root. (This in turn implies that all characteristic roots are integers.) 
They showed that if $(n,m,s)$  is a \textit{non-degenerate} solution of the equation
    \[
     u_n=Am!+Bs,\,  \, s\in \mathcal{S}_{\mathcal{P}},
    \]
 where $\max\{|A|,|B|\}\leq K$ for a fixed $K$, then $n$ can be effectively bounded. As a particular case, we have the sequence $\{C_n\}_{n\geq0}$  of Cullen numbers, where $C_n=n2^n+1$. Indeed, it satisfies the recurrence relation $C_n = 5\,C_{n-1}-8\,C_{n-2} +4\,C_{n-3}$ and the characteristic polynomial
    \[
    x^3-5x^2+8x-4=(x-2)^2(x-1)
    \]
has a repeated root. There has been recent interest in finding all  Cullen numbers having a given Diophantine property. Luca and Noubissie   established that  if 
    \[
    C_n=\pm m!+s,\ m\geq 2,
    \]
 and $\mathcal{P}=\{2,3,5,7\}$, then $n\leq 8$. Indeed, $C_8=4!+3^4\cdot 5^2$. We refer to \cite{BerPiYougFibCul, BerrizbeitiaLuca2012, Ybilu2019, Cullen05, Lucastan, DiegoM2014, DiegoM2015, MeherRout2023} and the references therein for other results on Cullen numbers.

Recently, B\'erczes, Hajdu, Luca and Pink \cite{BHLP_23} considered the Diophantine equation
    \begin{align*}
         u_n=a_1m_1!+\cdots+a_km_k!+Bs,\, s\in \mathcal{S}_{\mathcal{P}}
    \end{align*}
    in integers $n,m_1,\ldots,m_k,s$, where $\{u_n\}_{n \geq 0}$ is a non-degenerate binary recurrence sequence. As an application, they solved the equation
    \begin{align*}
         F_n= m_1!+m_2!+s,\, s\in \mathcal{S}_{\mathcal{P}},
    \end{align*}
   where $\mathcal{P}=\{2,3,5,7\}$.

%===============================
In this paper, we consider a variant of this problem where $\{u_n\}$ is a ternary recurrence sequence $\{ u_n\}_{n \geq 0}$ satisfying
\[
    u_n = r_1u_{n-1} + r_2 u_{n-2} +r_3u_{n-3} \ \ \ (n\geq 3),
\]
where $r_1,r_2,r_3$ are non-zero integers and
the characteristic polynomial has a double root and $1$ as a root. Let 
    \[
    f(X) = X^3-r_1X^2-r_2X-r_3= (X-\alpha)^2(X-\beta)
    \]
 be the characteristic polynomial of $\{u_n\}_{n \geq 0}$. As already noted, it follows that $\alpha$ and $\beta$ are integers by virtue of the fact that $f$ has a double root (see \cite{LucaNoubissieCombinationfact}). Let $\gamma = \max{\{|\alpha|, |\beta|\}}$. Since $\alpha$ or $\beta$ equals $1$, it follows that $\gcd(r_1,r_2,r_3)=1$. We assume that $\alpha/\beta \neq \pm 1$. We can write 
    \begin{equation}\label{u_n}
    u_n = (an+c)\alpha^n+b\beta^n, \text {     where     } a,b,c\in \mathbb{Q}, \ \ \ a \neq 0.
\end{equation}  
Let
\begin{equation*}%\label{Y}
    Y=\max \left\{\left|r_{1}\right|,\left|r_{2}\right|,\left|r_{3}\right|,\left|u_{0}\right|,\left|u_{1}\right|,\left|u_{2}\right|, 11 \right\} .
\end{equation*}

Our first result is regarding the greatest prime divisor of the difference of $u_n$ and linear combinations of factorials.
 Let us denote the greatest prime dividing an integer $y$ by $P(y)$.
\begin{theorem}\label{th:general} 
    Let $k,A$ be positive integers and $\{u_n\}_{n\geq 0}$ be a non-degenerate ternary recurrence sequence with a double root and $1$ as a root. Assume that $|a_i|\leq A$ for $i=1,2, \ldots, k$ and the unknown integers $m_i$ satisfy $m_1>m_2>\cdots > m_k \geq 1 $.  Put  $c_1=Y^8$, $c_2= 2.02 \cdot 10^{12}  \log^2{Y}$ and let $n_1= n_1(k)$ be the largest %\textcolor{red}{ non-degenerate}
    integer solution of the inequality 
    \[
    n < 1.45\ (\log{4(A+1)} + 10.49  \ c_2 \log^3{n})^k.
    \]
    Then,
    \[
    P\bigg( u_n-\sum_{i=1}^k a_im_i!  \bigg) > c_3(n),
    \]
    whenever $\sum_{i=j_1}^{j_2}a_{i}m_{i}!\notin \{(an+c)\alpha^n,b\beta^n\}$ for $0< j_1\leq j_2\leq k$, and $n>c_4 := \max{\{c_1,n_1 \}}$, where
    \[
    c_3(n) := \bigg( \frac{n}{1.45} \bigg)^\frac{1}{2k+2} \bigg( \frac{\log{4A}}{4} + 2.63  \ c_2 \log^3{n} \bigg)^{-\frac{1}{2}}.
    \]
\end{theorem}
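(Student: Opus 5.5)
We argue by contradiction. Suppose $n>c_4$ and $P\bigl(u_n-\sum_{i=1}^k a_im_i!\bigr)\le c_3(n)$. Set $\mathcal{P}$ to be the set of primes up to $c_3(n)$ and write
\[
(an+c)\alpha^n+b\beta^n=\sum_{i=1}^k a_im_i!+s,\qquad s\in\mathcal{S}_{\mathcal{P}}\cup\{0\}
\]
(if $s$ is negative we absorb the sign). Since one of $\alpha,\beta$ equals $1$ and $\alpha/\beta\neq\pm1$, exactly one of the terms $(an+c)\alpha^n$ and $b\beta^n$ is large, of size roughly $n\gamma^n$. The other term is bounded by a constant depending on $Y$; indeed we first clear denominators of $a,b,c$, whose heights are bounded by a power of $Y$, which is where $c_1=Y^8$ enters.

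The strategy is the Grossman--Luca / B\'erczes--Hajdu--Luca--Pink iteration. We compare $m_1!$ with the dominant term to get $m_1\ll n$, using Stirling's formula. Then, inductively on $j$, we isolate the dominant term against the partial sum $a_1m_1!+\cdots+a_jm_j!$ and against $s$. Two kinds of estimates are used at each step.

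\textbf{$p$-adic step.} For each prime $p\le c_3(n)$ we bound $\nu_p$ of the dominant term minus the partial sum. We use the $p$-adic valuations of factorials, $\nu_p(m!)\approx m/(p-1)$, together with Yu's $p$-adic linear forms in logarithms applied to $(an+c)\alpha^n-(\text{const})$ or $b\beta^n-(\text{const})$. This gives $\nu_p\ll c_2\log^3 n$, roughly, with $c_2=2.02\cdot10^{12}\log^2Y$ absorbing Yu's constant. Summing over primes shows $\log s\ll \pi(c_3(n))\,c_3(n)\,c_2\log^3 n$. The choice of $c_3(n)$ makes this smaller than $\log$ of the dominant term, so $s$ cannot be the main term.

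\textbf{Archimedean step.} When the gap $m_j-m_{j+1}$ is large, the factorial $m_{j+1}!$ is tiny relative to $m_j!$. Rewriting the equation so that $(a_1m_1!+\cdots+a_jm_j!)$ divides a difference, and using that $m_j!$ is divisible by high powers of all small primes, forces a large power of each $p$ into the dominant term minus a small quantity. This contradicts the $p$-adic bound unless that small quantity vanishes. Vanishing is excluded exactly by the hypothesis $\sum_{i=j_1}^{j_2}a_im_i!\notin\{(an+c)\alpha^n,b\beta^n\}$.

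Hence at each stage we get a recurrence of the shape $m_j-m_{j+1}\ll(\log 4(A+1)+10.49\,c_2\log^3 n)\cdot(\text{previous bound})$. Iterating $k$ times bounds $n$ by $1.45\,(\log4(A+1)+10.49\,c_2\log^3n)^k$, contradicting $n>n_1$.

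The main obstacle is the $p$-adic estimate. We must apply Yu's theorem uniformly over all primes up to $c_3(n)$, with explicit constants, to $\nu_p\bigl((an+c)\alpha^n-x\bigr)$, where the non-unit factor $an+c$ complicates matters. We would first handle $\nu_p(an+c)\le\log(|a|n+|c|)/\log p$ trivially, reduce to $\nu_p(\alpha^n-y)$, and use Yu's bound with heights $\le\log Y$, $\log n$. Tracking constants to reach precisely $2.63\,c_2\log^3n$ and the exponent $1/(2k+2)$ is the delicate bookkeeping.
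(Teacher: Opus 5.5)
There is a genuine gap, and it sits exactly where $c_3(n)$ should come from. Your endgame bounds $n$ by $1.45(\log 4(A+1)+10.49\,c_2\log^3 n)^k$ and contradicts $n>n_1$. In that scheme, $P\le c_3(n)$ is used only to show that $s$ is not the dominant term. But that $P$-free bound is the pure-factorial statement (Lemma~\ref{n<f(n,t)}, with coefficients $a_i\pm1$ allowed). Showing that $s$ is not dominant does not remove the nonzero summand $s$ from the equation. It has to be carried through the induction, and then $P$ enters every step.

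In the paper, $n>n_1$ is used only to ensure $s\neq0$, to discard vanishing subsums containing $s$, and to exclude $|s|=m_i!$. This lets $s$ be inserted strictly into the size ordering of the $k+1$ terms $a_km_k!,\dots,a_1m_1!,s$. An induction over these $k+1$ terms is then combined with $u_n=N_{k+1}$ and the archimedean bound $|u_n|>\gamma^n/(2Y^3)$ of Lemma~\ref{un>alpha}. That bound is where $n>c_1=Y^8$ is needed; it has nothing to do with clearing denominators. The result is
\[
n<\frac{1}{\log\gamma}\bigl(\log 4A+2.63\,c_2P^2\log^3 n\bigr)^{k+1}\le 1.45\,P^{2k+2}\Bigl(\frac{\log 4A}{4}+2.63\,c_2\log^3 n\Bigr)^{k+1},
\]
which is literally $P>c_3(n)$. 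The exponent $1/(2k+2)$ records $k+1$ steps, each costing a factor $P^2$. No $k$-step, $P$-free iteration can produce it, and the contradiction is with $P\le c_3(n)$ itself, not with $n>n_1$.

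The iteration you describe is also set up the wrong way round. Lemma~\ref{un<logn} gives $\nu_p(u_n-t)<c_2\,p\log^2n\log^+t$, so the bound depends on both $p$ and $\log|t|$. A uniform bound $\nu_p\ll c_2\log^3n$ is not available. With $t=a_1m_1!+\cdots+a_jm_j!$, a top partial sum whose logarithm is in general of order $n$, Yu's estimate is worthless. Your ``archimedean step'' does contain the right mechanism: $u_n$ minus the lower terms equals a highly divisible sum of larger factorials. But it only works bottom-up.

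Concretely, let $N_J$ be the sum of the $J$ smallest of the $k+1$ terms, and prove $\log|4N_J|<(\log 4A+2.62\,c_2P^2\log^3 n)^J$. In the inductive step, take $p\mid s$ (so $p\le P$) and compare $\nu_p(u_n-N_J)<c_2P\log|4N_J|\log^2 n$ with $\nu_p(m_l!)>m_l/(2p)$ (Lemma~\ref{v(a!)>a/2p}) for the smallest remaining factorial. If $m_l<2c_2P^2\log|4N_J|\log^2n$, this bounds the next term, including $s$ when $|s|<m_l!$. Otherwise there are two possibilities:
\begin{itemize}
\item $s$ is already in $N_J$, and one gets a contradiction;
\item $\nu_p(s)=\nu_p(u_n-N_J)$ for every $p\mid s$. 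Then $\pi(P)<2P/\log P$ gives $\log|s|=\sum_{p\mid s}\nu_p(s)\log p<2c_2P^2\log|4N_J|\log^2 n$. This bounds the next term, whether it is $s$ or $a_lm_l!$ with $m_l!<|s|$.
\end{itemize}
So your ``sum over primes'' bound on $\log s$ is the right idea. However, it holds only in this case and only with the factor $\log|4N_J|$, which is what makes the bound grow geometrically in $J$. The top-down ingredients ($m_1\ll n$ via Stirling, bounds on the gaps $m_j-m_{j+1}$) are not needed. Note also that when $\alpha=1$ the non-dominant term is $an+c$, which is not bounded by a constant.
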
 

 Our next result deals with 
Diophantine equations of the form
    \begin{equation}\label{eq1coro}
        u_n= a_1m_1!+a_2m_2!+ \cdots +a_km_k! + b_1s, %\qquad \max{\{|a_1|,|a_2|,\ldots |a_k|,b_1\}}\leq A 
    \end{equation}
   where  $s\in \mathcal{S_{\mathcal{P}}}$. 
We say that a solution $(n,m_1,\ldots,m_k,s)$ is \textit{non-degenerate} if $\sum_{i=j_1}^{j_2}a_{i}m_{i}!\notin \{(an+c)\alpha^n,b\beta^n\}$ for $0< j_1\leq j_2\leq k$.
\begin{theorem}\label{th:corollary}
    Let $k,A$ be positive integers and $\{u_n\}_{n\geq0}$ be a non-degenerate ternary recurrence sequence with double root and $1$ as a root. % Further, assume that $k \geq 1$ and $A \geq 1$ are fixed positive integers. 
    Consider the Diophantine equation \eqref{eq1coro}
    %\begin{equation*}
     %   u_n= a_1m_1!+a_2m_2!+ \cdots +a_km_k! + b_1s, %\qquad \max{\{|a_1|,|a_2|,\ldots |a_k|,b_1\}}\leq A 
    %\end{equation*}
    with $ \max{\{|a_1|,|a_2|,\ldots |a_k|,|b_1|\}}\leq A$, $m_1>m_2\cdots >m_k\geq 1$ and $s\in \mathcal{S_{\mathcal{P}}}$. Then for all non-degenerate solutions $(n,m_1,\ldots,m_k,s)$, we have 
    %in integer unknown satisfying $s\in \mathcal{S_{\mathcal{P}}}$ and $m_1>m_2\cdots m_k\geq 1$. Then 
    \begin{equation*}
        n \leq c_5 := \max{\{ c_4 , c_6\} },
    \end{equation*}
    where $c_4$ is defined in the statement of Theorem \ref{th:general} and 
    \[
    c_6  :=  2^{3k+3}c_7\log^{3k+3}(c_7(3k+3)^{3k+3})
    \]
    with
    \[
    c_7 := 1.45(\max{\{P,A\}})^{2k+2}(2\, c_2\log{4A})^{k+1}.
    \]
\end{theorem}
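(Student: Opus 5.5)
Theorem~\ref{th:corollary} follows from Theorem~\ref{th:general} by comparing the prime $P(b_1 s)$ with the lower bound $c_3(n)$. Let $(n,m_1,\ldots,m_k,s)$ be a non-degenerate solution of \eqref{eq1coro} and assume $n>c_4$; otherwise there is nothing to prove. Then
\[
u_n-\sum_{i=1}^k a_i m_i! = b_1 s .
\]
Since the solution is non-degenerate, Theorem~\ref{th:general} applies with the same $a_i$ and $A$, and gives
\[
P(b_1 s) > c_3(n).
\]
Every prime dividing $b_1 s$ either lies in $\mathcal{P}$ or divides $b_1$, and $|b_1|\le A$. Hence $P(b_1s)\le \max\{P,A\}$. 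If $b_1s=0$, i.e.\ $s$ were absent, we would fall into a degenerate configuration, but $s\geq 1$ and $b_1\neq 0$, so this case does not occur. We therefore obtain
\[
\Big(\frac{n}{1.45}\Big)^{\frac{1}{2k+2}} < \max\{P,A\}\Big(\frac{\log 4A}{4}+2.63\,c_2\log^3 n\Big)^{1/2}.
\]

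Next we raise this inequality to the power $2k+2$ and simplify the bracket. For $n>c_4\ge c_1=Y^8$ we have $\log n\ge 8\log 11>1$. We will bound the bracket by something of the shape $c_2\log 4A\cdot\log^3 n$ times a small constant. For example,
\[
\frac{\log 4A}{4}+2.63\,c_2\log^3 n \le 2c_2\log(4A)\log^3 n,
\]
since $c_2\log^3 n$ is huge and $\log 4A\ge\log 4>1$; this is a routine check that the constant $2.63$ is absorbed. This yields
\[
n < 1.45\,(\max\{P,A\})^{2k+2}\,(2c_2\log 4A)^{k+1}\,(\log n)^{3k+3} = c_7 (\log n)^{3k+3}.
\]

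The last step is the standard lemma: if $x< c(\log x)^{m}$ with $c$ large, then $x< 2^m c(\log(c\,m^m))^m$. Here $m=3k+3$ and $c=c_7$, which gives exactly $n<c_6$. I would prove this lemma directly if it is not already available. Put $x=c\,y^m$. Then $y<\log x=\log c+m\log y$. Using $\log y\le y/(2m)+\log(2m)-1$, or a similar bound, we get $y<2\log(c\,m^m)$. Hence $n\le\max\{c_4,c_6\}=c_5$.

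I expect the main obstacle to be bookkeeping rather than ideas. First, the exponent in $c_7$ is $(2c_2\log 4A)^{k+1}$, which matches the squared bracket raised to the power $k+1$. Second, the crude absorption of $\log(4A)/4$ into the main term must be justified using the size of $c_2\log^3 n$. Third, the constant $2^{3k+3}$ has to come out of the inversion lemma with the stated form $\log(c_7(3k+3)^{3k+3})$; I would consult the Guzm\'an--Luca type lemma (if $m\ge1$, $T>(4m^2)^m$ and $x/(\log x)^m<T$, then $x<2^mT(\log T)^m$) and adapt it accordingly.
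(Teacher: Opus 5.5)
Your proposal is correct and matches the paper's proof step for step. Both apply Theorem~\ref{th:general} to $u_n-\sum a_im_i!=b_1s$, use $P(b_1s)\le\max\{P,A\}$, absorb the bracket into $2c_2\log(4A)\log^3 n$ to get $n<c_7\log^{3k+3}n$, and then invert. One precision point: the absorption needs $2\log 4\approx 2.77>2.63$, not merely $\log 4>1$, and this is what the paper's ``$<0.73$'' check encodes. The only real difference is the inversion step: the paper cites the Peth\H{o} lemma (Lemma~\ref{4.3} with $u=0$, $v=c_7$, $h=3k+3$), whereas your direct argument via $\log y\le y/(2m)+\log(2m)-1$ is valid and gives exactly $c_6$ without that citation.
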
   
Next, we completely solve an equation as in the above theorem.
%\begin{theorem}\label{th:specfic_p1}
 %   Let $\{C_n\}_{n\geq0}$ denote the sequence of Cullen numbers and $\mathcal{P}= \{2, 3, 5, 7 \}$. Then, all solutions of the equation
  %  \begin{equation}\label{m1m2s}
   % C_n = m_1! + m_2! + s \qquad \text{in } n, m_1, m_2,s \in \mathbb{N}, m_1 \geq m_2, s\in \mathcal{S}_{\mathcal{P}} 
    %\end{equation}
 %   are given by
%\[    
%\begin{aligned}
 % &[n, m_1, m_2, s] \in \{[1, 1, 1, 1], [2, 1, 1, 7], [2, 2, 1, 6],[2, 2, 2, 5], [2, 3, 1, 2], [2, 3, 2, 1],\\
 % &[3, 2, 2, 21], [3, 3, 1, 18], [4, 1, 1, 63], [4, 4, 1, 40], [4, 4, 3, 35], [5, 4, 2, 135], [5, 5, 1, 40],\\
  %&[5, 5, 3, 35], [6, 3, 1, 378], [6, 4, 1, 360], [7, 6, 2, 175], [8, 6, 3, 1323], [9, 6, 1, 3888]\}.
%\end{aligned}
%\]
%We also solve the case $m=1$ of \cite[Theorem 1.3]{LucaNoubissieCombinationfact}.\\
%\end{theorem}  
\begin{theorem}\label{th:specfic_p}
    Let $\{C_n\}_{n\geq0}$ denote the sequence of Cullen numbers and $\mathcal{P}= \{2, 3, 5, 7 \}$. Then, all non-degenerate solutions of the equation
    \begin{equation}\label{m1m2s}
    C_n = m_1! + m_2! + s \qquad \text{in } n, m_1, m_2,s \in \mathbb{N}, m_1 \geq m_2, s\in \mathcal{S}_{\mathcal{P}} 
    \end{equation}
    are given by
    \[    
    \begin{aligned}
     &[n, m_1, m_2, s] \in \{[2, 2, 2, 5], [3, 2, 2, 21], [4, 4, 3, 35], [5, 4, 2, 135], [5, 5, 3, 35], [7, 6, 2, 175], \\
     &[8, 6, 3, 1323]\}
    \end{aligned}
    \]
    and all degenerate solutions of eq. \eqref{m1m2s} for $n<10^{66}$ are given by 
    \[    
    \begin{aligned}
      &[n, m_1, m_2, s] \in \{[1, 1, 1, 1], [2, 1, 1, 7], [2, 2, 1, 6], [2, 3, 1, 2], [2, 3, 2, 1], [3, 3, 1, 18], \\
     &[4, 1, 1, 63], [4, 4, 1, 40], [5, 5, 1, 40], [6, 3, 1, 378], [6, 4, 1, 360], [9, 6, 1, 3888]\},
    \end{aligned}
    \]  
    except possibly if $m_2=1$, $m_1>10^4$ and $\sqrt{C_n}\leq m_1!$ simultaneously.
%We also solve the case $m=1$ of \cite[Theorem 1.3]{LucaNoubissieCombinationfact}.\\
\end{theorem}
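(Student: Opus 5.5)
Since this is the Cullen case, I take $\alpha=2$, $\beta=1$, $a=1$, $c=0$, $b=1$ in \eqref{u_n}, and $Y=11$. A solution is degenerate when some block sum among $m_1!$, $m_2!$, $m_1!+m_2!$ equals $n2^n$ or $1$. Since $m_1!+m_2!\ge 2$, the value $1$ can only be hit by $m_2=1$ (or $m_1=1$). The plan has two halves: the non-degenerate solutions and the degenerate ones.

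\textbf{Non-degenerate solutions.} Theorem \ref{th:corollary} with $k=2$, $A=1$, $P=7$ gives an explicit bound $n\le c_5$, which I expect to be astronomically large. I would then reduce it directly rather than through the general constants.

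\begin{enumerate}
\item Write $s=2^{x_1}3^{x_2}5^{x_3}7^{x_4}$. First compare $m_1!$ with $n2^n$. If $m_1!$ is much larger than $n2^n$, then $m_1!$ is nearly equal to $-m_2!-s$, which forces $s$ to be close to $-m_1!$. This is impossible for positive quantities, so $m_1$ is at most about $\log(n2^n)/\log\log$, i.e.\ $m_1\ll n/\log n$.
\item Let $\nu_p(\cdot)$ denote the $p$-adic valuation. For each $p\in\{2,3,5,7\}$, the $p$-adic valuation of $C_n-m_1!-m_2!$ is either $\nu_p(s)$ or is controlled. One gets $x_i\le\nu_p(C_n-m_2!)$ unless the factorial terms cancel; $p$-adic linear forms (Yu's theorem) bound these valuations by $O(\log^2 n)$. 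This is exactly the mechanism behind Theorem \ref{th:general}.
\item The 2-adic part is easy. If $m_2\ge 2$, then $\nu_2(C_n-m_1!-m_2!)$ is determined by $\nu_2(1-m_2!-m_1!)$, and this is small once $m_2\ge 2$. Hence $x_1$ is essentially bounded by $\nu_2(m_i!)$.
\item After these steps, $s$ is at most $(\log n)^{O(1)}$ times a bounded power. Then $n2^n\approx s$ forces $n$ to be tiny unless $m_1!$ matches $n2^n$ closely. I would work through the linear form $\bigl|n2^n - m_1!\bigr|$ together with the size considerations, and use Baker--Davenport/LLL reduction on the real linear form $x_1\log2+\cdots+x_4\log7-\log n-n\log 2$ (from $s\approx n2^n$ when $m_1!$ is small) to bring $n$ down to a few hundred.
\item Finally, do a computer search over $n$, $m_1$, $m_2$ with a sieve modulo a product of primes outside $\mathcal P$ to check whether $C_n-m_1!-m_2!$ is an $\mathcal{S}_{\mathcal P}$-unit.
\end{enumerate}

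\textbf{Degenerate solutions for $n<10^{66}$.} I would split by which block is degenerate.

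\begin{itemize}
\item If $m_1!=n2^n$ (or $m_2!$, or the sum), then $n2^n$ is a factorial or a sum of two factorials. Since $\nu_2(m!)=m-O(\log m)$ while $n2^n$ would need $m!\approx n2^n$, the size comparison forces $n$ small, and a quick check finishes this case.
\item If $m_2=1$, the equation becomes $n2^n=m_1!+s$. Here the $2$-adic valuation gives $\nu_2(m_1!+s)\ge n$.
\begin{itemize}
\item If $m_1\le 10^4$, then for each fixed $m_1$ this is an $S$-unit-type equation $n2^n-m_1!=s$. Each case can be handled with the $2$-adic and $3$-adic constraints together with reduction, up to $n<10^{66}$.
\item If $\sqrt{C_n}>m_1!$, then $s$ is close to $n2^n$. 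The real linear form in $\log 2,\log3,\log5,\log7,\log n$ is then exponentially small, of size about $m_1!/n2^n<2^{-n/2}$, which is amenable to LLL for $n<10^{66}$. The same applies to the remaining $p$-adic forms.
\end{itemize}
\item This leaves exactly the excluded regime: $m_2=1$, $m_1>10^4$, $\sqrt{C_n}\le m_1!$.
\end{itemize}

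\textbf{Main obstacle.} I expect the hardest step to be the reduction of the very large bound $c_5$ in the non-degenerate case. The $p$-adic valuations must be bounded simultaneously for four primes, and the term $\log n$ inside the linear form complicates the LLL step. I would first try to eliminate $n$ by treating $\log n$ via $n$'s $\mathcal P$-part, or handle it as a fifth real variable, before running LLL.
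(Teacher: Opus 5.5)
Your proposal has a genuine gap, and it sits where you put the ``main obstacle'': getting from the bound of Theorem \ref{th:corollary} (here $n\le 10^{66}$) down to a searchable range. That passage is the whole content of the proof, and your intended tools do not apply. Your dichotomy ($s\approx C_n$ versus $m_1!\approx C_n$) is the right one; the paper splits on $m_1!\gtrless\sqrt{C_n}$. But you have no working method on either side.

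\emph{Why the reduction tools fail.}
\begin{enumerate}
\item Baker--Davenport/LLL reduce forms $\sum x_i\log\eta_i$ with \emph{fixed} $\eta_i$ and unknown integers $x_i$. In $x_1\log2+\cdots+x_4\log7-n\log2-\log n$, the term $\log n$ has fixed coefficient $1$ but an unknown value up to $\log 10^{66}$, so there is no fixed lattice.
\item Expressing $\log n$ through the $\mathcal{P}$-part of $n$ is impossible, since $n$ has arbitrary prime factors. Making it a ``fifth variable'' is meaningless, because the number itself is unknown, not its coefficient. The same objection kills your LLL step in the degenerate case $m_2=1$, $m_1!<\sqrt{C_n}$.
\item When $m_1!\ge\sqrt{C_n}$ there is no small form in fixed logarithms at all, since $n2^n-m_1!$ involves two varying quantities. ``Work through $|n2^n-m_1!|$'' is not a method.
\item Re-running Yu only gives $\nu_p(s)<c_2\,p\log^2 n$, which is of order $10^{18}$ at $n\approx 10^{66}$. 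It also yields $\log s=O(\log^2 n)$, not $s=(\log n)^{O(1)}$ as you state.
\end{enumerate}

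\emph{The missing idea is the paper's purely $p$-adic reduction.}
\begin{enumerate}
\item For fixed $t$ and $p\in\{3,5,7\}$, the $n$ with $p^j\mid n2^n+1-t$ form exactly $p-1$ classes modulo $p^j(p-1)$ (Lemma \ref{p-1sol}). The digit recursion of Lemma \ref{njpj} lifts these classes explicitly. Checking that the lifted representatives $n_J$ exceed $10^{66}$ gives $\nu_p(C_n-t)<J$ with $J\le 150$, uniformly for $n\le 10^{66}$ and the needed $t\in\{0,\,m_2!,\,m_1!+m_2!\}$.
\item If $m_1!\ge\sqrt{C_n}$ and $m_1\le 10^4$, then $n<236899$ and direct computation finishes.
\item If $m_1!\ge\sqrt{C_n}$, $m_1>10^4$ and $m_2\ge 2$, then $\nu_p(m_1!)$ (or $\nu_p(m_2!)$) dwarfs these bounds. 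Hence $\nu_p(s)$ equals $\nu_p(C_n-m_2!)$ or $\nu_p(C_n)$ and lies in an explicit box. The computed bound $\max\nu_2(3^a5^b7^c-1)\le 20$ over that box then gives a $2$-adic contradiction with $\nu_2(n2^n)\ge n>m_1$, after first forcing $m_2\le 24$ when $m_2<500$. Only $m_2=1$ escapes, which is exactly the stated exception.
\item If $m_1!<\sqrt{C_n}$, then $C_n-\sqrt{C_n}<m_2!+s$, so valuation bounds on $s$ bound $n$ directly. Iterating shrinks $m_1$ from $560$ to $69$, where a search finishes.
\item For $m_2=1$ and $m_1\ge 560$ in that regime, $\nu_2(s)=\nu_2(m_1!)$ and $\nu_p(s)=\nu_p(n)\le\log_p 10^{66}$. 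Hence $s<m_1!$, contradicting $s>m_1!$.
\end{enumerate}
None of this needs a real linear form.
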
  
%\textcolor{red}{ For Cullen numbers, the degenerate solution whenever $m_2=1$ and \\ $C_2 = 3! + 2! + 1$.}
\textit{Remark.} Taking $m_1=2$ and $m_2=1$, eq. \eqref{m1m2s} may be rewritten as $n2^n-1=1+s$ i.e., $W_n=1!+s$, where $W_n=n2^n-1$ is the $n$th Woodall number. In \cite [Theorem 1.3]{LucaNoubissieCombinationfact}, the equation 
\[
W_n=m!+s,\ s\in \mathcal{S_P}\ \textrm{ for }\ \mathcal{P}= \{2, 3, 5, 7 \}
\]
was solved for $m\geq 2$. Theorem \ref{th:specfic_p} implies that the only solution of $W_n = 1! +s $ when $n<10^{66}$ is $W_2 = 1!+ 6$.\\

%==========================================
A key tool in our proofs is Yu's $p$-adic analogue for a lower bound for linear
forms in logarithms of algebraic numbers. As in \cite{LucaNoubissieCombinationfact} and \cite{BHLP_23}, an inductive argument that we shall employ often involves comparing lower bounds of $p$-adic valuation of factorials and with upper bounds for $p$-adic valuation of $u_n-t$, for some suitably chosen integer $t$. We note that for $t=0$, such an upper bound was already established in \cite[Lemma 3.4]{LucaNoubissieCombinationfact}. For $t\neq 0$, we prove the required estimate in Lemma \ref{un<logn}. Further, we establish that there are only finitely many (effectively computable) members of the sequence $\{u_n\}$ which can be represented as a (non-degenerate) linear combination of a fixed number of factorials (see Lemma \ref{n<f(n,t)}).
To deal with degenerate cases in Theorem \ref{th:specfic_p}, we use Lemma \ref{p-1sol} proved in Section \ref{sec_prelim} using a Hensel-lifting type argument. To deal with non-degenerate cases, we follow ideas from \cite{LucaNoubissieCombinationfact}. In Section \ref{sec_prelim}, we record some preliminary results and prove certain auxiliary results for later use. Sections 3--5 are dedicated to the detailed proofs of Theorems \ref{th:general},  \ref{th:corollary} and \ref{th:specfic_p}, respectively.
%===============================
\section{Preliminaries}\label{sec_prelim}
%introduce $\nu_p$\\
Let $p$ be a prime number and $a$ be an integer, then the $p$-adic valuation is defined as  
\begin{align*}
\nu_{p}(a) =
\begin{cases}
\max\{\,k\in \mathbb{N} \cup 0 : p^{k}\mid a\,\}, & \text{if } a\neq 0, \\
\infty, & \text{if } a = 0.
\end{cases}
\end{align*}

%basic inequality\\
The next three lemmas record some basic properties of $\nu_p$ and an estimate involving $p$-adic valuation of factorials.
\begin{lemma}\label{padicid}
    For a prime number $p$ and positive integers $a$ and $b$, we have
    \begin{enumerate}[(i)]

   %     \item   $\nu_p(-a)=\nu_p(a)$,

        \item   $\nu_p(ab)=\nu_p(a)+\nu_p(b)$,
        
        \item   $\nu_p(a+b)\geq \textit{min}(\nu_p(a),\nu_p(b))$. Here, equality holds 
         if $\nu_p(a)\neq \nu_p(b)$.
        %then  $\nu_p(a+b)=\textit{min}(\nu_p(a),\nu_p(b))$,

        % \item   If $\nu_p(a)= \nu_p(b)$  then  $\nu_p(a+b)\geq \textit{min}(\nu_p(a),\nu_p(b))$.
    
    \end{enumerate}
\end{lemma}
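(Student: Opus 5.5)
The plan is to prove both items directly from unique factorization in $\mathbb{Z}$. For positive integers $a$ and $b$, set $\alpha=\nu_p(a)$ and $\beta=\nu_p(b)$. By the definition of $\nu_p$ as the largest exponent of $p$ dividing the integer, we may write $a=p^{\alpha}a'$ and $b=p^{\beta}b'$ with $p\nmid a'$ and $p\nmid b'$. The infinite case never occurs, since $a,b>0$ and also $ab>0$ and $a+b>0$.

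For (i), we have $ab=p^{\alpha+\beta}a'b'$. Because $p$ is prime and divides neither $a'$ nor $b'$, Euclid's lemma gives $p\nmid a'b'$. Hence the exact power of $p$ dividing $ab$ is $p^{\alpha+\beta}$, that is, $\nu_p(ab)=\alpha+\beta$.

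For (ii), we may assume by symmetry that $\alpha\le\beta$. Then
\[
a+b=p^{\alpha}\bigl(a'+p^{\beta-\alpha}b'\bigr),
\]
so $p^{\alpha}\mid a+b$ and therefore $\nu_p(a+b)\ge\alpha=\min(\alpha,\beta)$. Now suppose $\alpha\neq\beta$, so that $\beta-\alpha\ge1$. Reducing the bracket modulo $p$ gives $a'+p^{\beta-\alpha}b'\equiv a'\not\equiv0\pmod p$. Consequently $p^{\alpha+1}\nmid a+b$, and equality holds.

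None of these steps is a real obstacle. The only point needing care is the strict inequality $\beta-\alpha\ge1$ in the equality case, since it is what makes the cofactor a unit modulo $p$. The same argument works verbatim for nonzero integers of arbitrary sign, and for $a+b=0$ it holds with the convention $\nu_p(0)=\infty$. This is how the lemma will be used later with $u_n-t$.
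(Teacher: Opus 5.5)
Your proof is correct and complete: writing $a=p^{\nu_p(a)}a'$ and $b=p^{\nu_p(b)}b'$ with $p\nmid a'b'$ gives both items exactly as you argue. The paper records this lemma without proof as a standard fact, and your argument is the standard one that fills it in.

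Keep your closing remark that the same argument covers nonzero integers of either sign, and the case $a+b=0$ via $\nu_p(0)=\infty$. The paper later applies the lemma to expressions such as $n2^n+(1-s)$ with $1-s<0$, which lie outside the stated hypothesis of positive $a,b$.
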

\begin{lemma}\label{vpfactorial}
    For a prime number $p$ and positive integer $a$
     \[
     \nu_p(a!) =    \sum_{i=1}^{\infty} \left\lfloor \frac{a}{p^{i}} \right\rfloor.
     \]
\end{lemma}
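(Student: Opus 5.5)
The plan is to prove Legendre's formula by counting, for each power of $p$, how many integers in $\{1,\ldots,a\}$ it divides. Two facts are needed.

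First, using Lemma \ref{padicid}(i) repeatedly, we have
\[
\nu_p(a!)=\sum_{j=1}^{a}\nu_p(j).
\]

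Second, for each positive integer $j$, we have
\[
\nu_p(j)=\sum_{i\ge 1}\mathbf{1}[p^i\mid j].
\]
This holds because $p^i\mid j$ exactly when $1\le i\le \nu_p(j)$, so the indicator sum counts precisely $\nu_p(j)$ terms.

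Substituting the second fact into the first and interchanging the two sums gives
\[
\nu_p(a!)=\sum_{i\ge1}\#\{1\le j\le a: p^i\mid j\}.
\]
The interchange is legitimate because only finitely many terms are nonzero: every term with $p^i>a$ vanishes.

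It then remains to count the multiples of $p^i$ in $\{1,\ldots,a\}$. These are the numbers $p^i,2p^i,\ldots,qp^i$, where $q$ is the largest integer with $qp^i\le a$, namely $q=\lfloor a/p^i\rfloor$. Hence the count equals $\lfloor a/p^i\rfloor$. This is also zero once $p^i>a$, so the infinite sum in the statement is really a finite sum.

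There is no real obstacle here. The only points needing care are justifying the interchange of summation, which follows from finiteness, and the elementary count of multiples.
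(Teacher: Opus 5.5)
Your proof is correct and complete. It is the standard double-counting proof of Legendre's formula: sum $\nu_p(j)$ over $j\le a$, write each $\nu_p(j)$ as the number of $i\ge 1$ with $p^i\mid j$, interchange the double sum (which has only finitely many nonzero terms), and count the multiples of $p^i$ up to $a$. The paper records this lemma as a well-known fact without proof, so there is no alternative route to compare against.
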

\begin{lemma}\label{v(a!)>a/2p}
    \cite[Lemma 1] {GrossLucaSumfact} Let $p$ be a prime number and let $a$ be a positive integer. If $a\geq p$ then $\nu_p(a!)>\frac{a}{2p}$.
\end{lemma}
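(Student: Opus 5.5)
The statement is a cited result, \cite[Lemma 1]{GrossLucaSumfact}, so we only need a short self-contained argument based on Legendre's formula (Lemma~\ref{vpfactorial}). The plan is to write $a = qp + r$ with $q = \lfloor a/p \rfloor \geq 1$ and $0 \le r \le p-1$. Since every term of the sum in Lemma~\ref{vpfactorial} is nonnegative, keeping only the first term gives
\[
\nu_p(a!) \geq \left\lfloor \frac{a}{p} \right\rfloor = q .
\]
It therefore suffices to prove $q > a/(2p)$, i.e.\ $2qp > a = qp + r$, i.e.\ $qp > r$.

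This last inequality follows from the hypothesis $a \ge p$: it forces $q \geq 1$, so $qp \geq p > p-1 \geq r$. Hence
\[
\nu_p(a!) \ge q = \frac{a-r}{p} > \frac{a}{2p},
\]
which is strict. Equivalently, one can use the elementary fact that $\lfloor x \rfloor > x/2$ for every real $x \geq 1$, applied to $x = a/p$. For $1 \le x < 2$ we have $\lfloor x \rfloor = 1 > x/2$. For $x \ge 2$ we have $\lfloor x \rfloor > x - 1 \ge x/2$.

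There is no real obstacle in this argument. The only point needing care is that the inequality is strict. That strictness comes from $r \le p-1 < p \le qp$, and it is exactly where the hypothesis $a \ge p$ is used. Without it, $\lfloor a/p \rfloor = 0$ and the bound fails.
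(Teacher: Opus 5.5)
Your proof is correct. The paper gives no proof of this lemma and only cites \cite[Lemma 1]{GrossLucaSumfact}. Your derivation is the standard self-contained one: keep only the first term of Legendre's formula (Lemma~\ref{vpfactorial}) to get $\nu_p(a!)\ge\lfloor a/p\rfloor$, then use $\lfloor x\rfloor > x/2$ for $x\ge 1$, where $a\ge p$ is used exactly to guarantee $x=a/p\ge 1$.
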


%============================================
%Yu's theorem 
%============================================
Next, we recall the definition and some properties of the absolute logarithmic height function. 
The absolute logarithmic height $h(\eta)$ of an algebraic number $\eta $ with degree $d(\eta)$ over $\mathbb{Q}$ is defined by 
\begin{align*}
    h(\eta)= \frac{1}{d(\eta)}\bigg{(}\log{|a_0|} + \sum^{d(\eta)}_{i=1} \log{\max{\{|\eta^{(i)}|,1\}}}\bigg{)},
\end{align*}
where the minimal polynomial is given by 
\begin{align*}
    f(X)=a_0\prod^{d(\eta)}_{i=1}(X-\eta^{(i)})\in \mathbb{Z}[X].
\end{align*}
We will use the following properties of the absolute logarithm height function $h(\cdot)$: 
\begin{lemma}\label{height}
If $\delta_1,\delta_2$ are algebraic numbers, then
   % \begin{align*}\label{height_of_product}
   %     h(\delta_1\pm\delta_2)&\leq h(\delta_1)+h(\delta_2)+\log 2,\\
    %    h(\delta_1\delta_2^{\pm 1})\ \ &\leq h(\delta_1)+h(\delta_2),\\
     %   h(\delta_1^m) \quad &=|m|h(\delta_1) \ (m\in\mathbb{Z}).
    %\end{align*}
    \begin{enumerate}
        \item  $h(\delta_1\pm\delta_2)  \leq h(\delta_1)+h(\delta_2)+\log 2,$
        \item  $h(\delta_1\delta_2^{\pm 1})\ \leq h(\delta_1)+h(\delta_2),$
        \item  $h(\delta_1^m) =|m|h(\delta_1) \ (m\in\mathbb{Z}).$
    \end{enumerate}
\end{lemma}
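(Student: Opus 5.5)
These are the standard height inequalities, and I will derive them directly from the definition of $h$ via Mahler measure. First I rewrite the definition as $h(\eta)=\frac{1}{d(\eta)}\log M(\eta)$, where $M(\eta)=|a_0|\prod_i \max\{1,|\eta^{(i)}|\}$. Equivalently, I will use the adelic form
\[
h(\eta)=\frac{1}{[K:\mathbb{Q}]}\sum_{v}[K_v:\mathbb{Q}_v]\log\max\{1,|\eta|_v\}
\]
for any number field $K$ containing $\eta$. To justify this form, I will show that the non-archimedean places contribute exactly $\log|a_0|$. This follows from Gauss's lemma applied to the primitive minimal polynomial: for each prime $p$, the local factor $\prod_i \max\{1,|\eta^{(i)}|_p\}$ equals $|a_0|_p^{-1}$. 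Independence of $K$ then follows because local degrees sum to the global degree. This is the one nontrivial step, and it is classical, so I will either cite it (Bombieri--Gubler, or Waldschmidt's book) or sketch the Gauss lemma argument.

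With the adelic formula in hand, I take $K=\mathbb{Q}(\delta_1,\delta_2)$ and work place by place.

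For (2), I use multiplicativity, $|\delta_1\delta_2|_v=|\delta_1|_v|\delta_2|_v$, together with $\max\{1,xy\}\le\max\{1,x\}\max\{1,y\}$. Summing over $v$ gives $h(\delta_1\delta_2)\le h(\delta_1)+h(\delta_2)$. For the inverse case I need $h(\delta^{-1})=h(\delta)$ when $\delta\neq 0$. This follows from the product formula, since $\log\max\{1,|\delta|_v\}-\log\max\{1,|\delta^{-1}|_v\}=\log|\delta|_v$, and these terms sum to zero over all $v$.

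For (1), at a non-archimedean $v$ the ultrametric inequality gives $\max\{1,|\delta_1\pm\delta_2|_v\}\le\max\{1,|\delta_1|_v\}\max\{1,|\delta_2|_v\}$. At an archimedean $v$ I instead get $|\delta_1\pm\delta_2|_v\le 2\max\{|\delta_1|_v,|\delta_2|_v\}$. This yields the same bound multiplied by an extra factor of $2$. Since the local degrees of the archimedean places sum to $[K:\mathbb{Q}]$, the extra contribution totals exactly $\log 2$.

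For (3), positive $m$ is immediate, because $\max\{1,|\delta^m|_v\}=\max\{1,|\delta|_v\}^m$. Negative $m$ then follows from the identity $h(\delta^{-1})=h(\delta)$ established above, and $m=0$ is trivial since $h(1)=0$.
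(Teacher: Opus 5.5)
The paper gives no proof of this lemma: it records these as standard properties of the absolute logarithmic height and uses them as a black box. Your derivation is the usual textbook argument (as in Bombieri--Gubler or Waldschmidt), and it is correct.

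The only substantive input is identifying the paper's Mahler-measure definition with the place-by-place sum, and your sketch supplies what is needed:
\begin{enumerate}
\item Gauss's lemma for the primitive minimal polynomial gives $\prod_i\max\{1,|\eta^{(i)}|_p\}=|a_0|_p^{-1}$.
\item The product formula on $\mathbb{Q}$ then gives $\sum_p\log|a_0|_p^{-1}=\log|a_0|$.
\item The identity $\sum_{w\mid v}[L_w:K_v]=[L:K]$ gives independence of the field, which is what lets you compute $h(\delta_1\pm\delta_2)$ and $h(\delta_1\delta_2)$ inside $\mathbb{Q}(\delta_1,\delta_2)$.
\end{enumerate}

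Your local inequalities all check out:
\begin{enumerate}
\item $\max\{1,xy\}\le\max\{1,x\}\max\{1,y\}$ for the product.
\item The ultrametric bound at finite places.
\item $\max\{1,2M\}\le 2\max\{1,M\}$ at archimedean places, with $\sum_{v\mid\infty}[K_v:\mathbb{R}]=[K:\mathbb{Q}]$ producing exactly $\log 2$.
\item The product-formula identity $\log\max\{1,|\delta|_v\}-\log\max\{1,|\delta^{-1}|_v\}=\log|\delta|_v$, giving $h(\delta^{-1})=h(\delta)$.
\end{enumerate}

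Two small points: the inverse in (2) and negative exponents in (3) need $\delta_2\neq 0$ and $\delta_1\neq 0$ respectively, which the statement leaves implicit. Also, your convention (unsquared modulus at complex places, weight $[K_v:\mathbb{Q}_v]=2$) must be used consistently in both the definition and the product formula, which you do.

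Compared with the paper's bare citation, your route makes the lemma self-contained. The adelic form reduces each of (1)--(3) to a one-line local inequality, whereas working directly with Mahler measures, especially for (1) and (2), would require norm or resultant arguments.
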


Let $\mathbb{K}$ be an algebraic number field of degree $D$ over $\mathbb{Q} $ embedded in $\mathbb{C}$. If $\pi$ is a prime ideal in the ring $\mathcal{O}_{\mathbb{K}}$ of algebraic integers in $\mathbb{K}$, we denote by $e_{\pi}$ and $f_{\pi} $ the ramification index and the inertial degree of $\pi$, respectively. Let $p$ be the prime number above $\pi$ and $\nu_{\pi}(\eta)$ be the order at which $\pi$ appears in the prime factorization of the principal ideal $\eta \mathcal{O_{\mathbb{K}}}$.
%$D$ be the degree of number field $\mathbb{K}$ over $\mathbb{Q} $ embedded in $\mathbb{C}$. 
Suppose $\eta_1,\eta_2,\ldots,\eta_l \in \mathbb{K}\setminus\{0,1\}$ and $d_1,\ldots,d_l\in \mathbb{Z}$. Let $B^*= \max{\{|d_1|,\ldots,|d_l|,3\}}$ and $\Lambda= {\eta_1 }^{d_1}\cdots {\eta_l }^{d_l}-1$.
A key tool for our proofs is the following estimate of Yu for linear forms in logarithms.

\begin{lemma}\label{Yu}
    [Yu \cite{yu1999p}] Let $H_j\geq \max{ \{h(\eta_j), \log{p}\}}$, for $j=1,\ldots,l$. If $\Lambda\neq0$, then 
    \begin{align*}
        \nu_{\pi}(|\Lambda|) \leq 19(20\sqrt{l+1}D)^{2(l+1)}e_{\pi}^{l-1}\frac{p^{f_{\pi}}}{(f_{\pi}\log{p})^2}\log{(e^5lD)}\ H_1\cdots H_l \log{B^*}.
    \end{align*}
\end{lemma}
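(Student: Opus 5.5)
The statement is Yu's $p$-adic estimate for linear forms in logarithms. I would not reprove it from scratch. Instead I would derive the displayed inequality as a specialization of Yu's main theorem in \cite{yu1999p}, so the work is bookkeeping: matching Yu's general constants and hypotheses with the simplified form stated here.

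\textbf{Step 1: Yu's general bound.} Yu's theorem bounds $\nu_\pi(\Lambda)$ for $\Lambda=\eta_1^{d_1}\cdots\eta_l^{d_l}-1\neq 0$ in the form
\[
C(l,D)\,e_\pi^{l-1}\,\frac{p^{f_\pi}}{(f_\pi\log p)^2}\,\log(e^5 lD)\,\max\{\log B^*, f_\pi\log p/D\}\,\prod_j h'(\eta_j).
\]
Here $h'(\eta_j)$ is a modified height satisfying $h'(\eta_j)\ge \max\{h(\eta_j), f_\pi\log p/D\}$. The constant $C(l,D)$ has several regimes, depending on the parity of $p$ and on whether $\mathbb{K}$ contains certain roots of unity (Kummer conditions). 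I would take the worst case over these regimes.

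\textbf{Step 2: matching heights and $B^*$.} Since $f_\pi\le D$, any $H_j\ge\max\{h(\eta_j),\log p\}$ dominates $h'(\eta_j)$, so the product $H_1\cdots H_l$ can replace $\prod_j h'(\eta_j)$. The $\max\{\log B^*,\dots\}$ factor needs more care. With $B^*\ge 3$, one either absorbs the extra factor into the constant or uses the fact that $H_j\ge \log p$ to swap it for $\log B^*$. This is routine but has to be done carefully.

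\textbf{Step 3: the constant (main obstacle).} The hard part is checking that Yu's explicit constant in every case, after these substitutions, is at most $19(20\sqrt{l+1}\,D)^{2(l+1)}$. This is exactly the simplified bound quoted in \cite{LucaNoubissieCombinationfact} and \cite{BHLP_23}, so I would verify it against their derivation rather than redo Yu's case analysis myself.

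\textbf{Underlying method.} If one insisted on a self-contained argument, Yu's own proof runs as follows:
\begin{enumerate}[(i)]
\item build an auxiliary polynomial in $\eta_1^{z},\dots,\eta_l^{z}$ vanishing to high order via Siegel's lemma;
\item extrapolate $p$-adically, using that a large $\nu_\pi(\Lambda)$ makes the $p$-adic logarithms nearly dependent;
\item obtain a contradiction from a multiplicity (zero) estimate on algebraic groups;
\item apply a Kummer-descent step to remove torsion obstructions.
\end{enumerate}
Making the constants explicit in this chain is where the real difficulty lies. For this paper, citing \cite{yu1999p} is the intended justification.
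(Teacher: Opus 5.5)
Your proposal agrees with the paper, which gives no proof of this lemma and simply quotes the bound from \cite{yu1999p} as a black box, so citing Yu is exactly the intended justification. The reduction you sketch in Steps 1--3 is unnecessary: the inequality in this simplified form, with $H_j\ge\max\{h(\eta_j),\log p\}$ and constant $19(20\sqrt{l+1}D)^{2(l+1)}$, is the standard version already stated in \cite{yu1999p} as a consequence of the main theorem.
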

The following lemma is due to \cite{Petho1986}. (See \cite[Appendix B]{SmartPbook}  for a proof.)
\begin{lemma}\label{4.3}
    Let $u,v \geq 0, h\geq 1 $ and $x\in \mathbb{R}$ be the largest solution of $x= u+ v(\log{x})^h$. Then 
    \[
    x < \max{\{ 2^h(u^{\frac{1}{h}} + v^{\frac{1}{h}}\log{(h^hv))^h}, 2^h(u^\frac{1}{h} + 2e^2)^h  \}}.
    \]
\end{lemma}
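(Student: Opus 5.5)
The statement is the lemma due to Pethő quoted just above (Lemma \ref{4.3}). The plan is to reduce it to a one-variable inequality by taking $h$-th roots, and then to absorb the logarithm using the elementary bound $\log t\le t/2$.

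First, the trivial cases. If $v=0$ then $x=u$, and $u<2^h(u^{1/h}+2e^2)^h$. If $x\le 1$ then $(\log x)^h\le 0$ or $\log x\le 0$ forces $x\le u$ (for $h$ not an integer and $x<1$, $(\log x)^h$ is not real anyway), so again the second term in the maximum dominates. So assume $v>0$ and $x>1$.

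Next, put $y=x^{1/h}$, $U=u^{1/h}$ and $V=h\,v^{1/h}$. Since $h\ge 1$, the function $t\mapsto t^{1/h}$ is subadditive on $[0,\infty)$, so from $x=u+v(\log x)^h$ we get
\[
y\le u^{1/h}+v^{1/h}\log x = U + h v^{1/h}\log y = U+V\log y .
\]
The key observation is that $v^{1/h}\log(h^h v)=h v^{1/h}\log(h v^{1/h})=V\log V$. Hence the claim reduces to showing
\[
y<2\max\{U+V\log V,\;U+2e^2\},
\]
after which raising to the $h$-th power gives the stated bound for $x$.

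Now write $\log y=\log V+\log(y/V)$ and use $\log t\le t/e<t/2$ for $t>0$, applied with $t=y/V$. This gives
\[
y\le U+V\log V+V\log(y/V) < U+V\log V + y/2,
\]
so $y<2(U+V\log V)$. When $V\ge e^2$ (or whenever $U+V\log V\ge U+2e^2$), this is already the first alternative.

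The step needing care is when $V$ is small, in particular $V<1$, so that $V\log V\le 0$. The inequality derived above remains formally valid, but I would double-check that no sign issue arises. In that regime I would instead bound $V\log y\le e^2\log y\le y/2+2e^2$, using $\log y\le y/(2e^2)+\log(2e^2)-1$ and $\log(2e^2)<e^2$, which yields $y<2(U+2e^2)$. This is the second alternative. Since $x$ is the largest solution, the resulting bound applies to it, which completes the plan.
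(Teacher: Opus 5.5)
The paper does not prove this lemma; it quotes it from Peth\H{o}, with a proof in Smart's Appendix B. The bound descends from the Peth\H{o}--de Weger estimate $x<2^h(u^{1/h}+v^{1/h}\log(h^hv))^h$, which is usually stated under the hypothesis $v>(e^2/h)^h$. The second term $2^h(u^{1/h}+2e^2)^h$ is exactly the first term evaluated at the threshold $v=(e^2/h)^h$, which is what lets the quoted version drop that hypothesis.

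Your proof is correct, takes a different, self-contained route, and in fact proves a little more. The substitution $y=x^{1/h}$, subadditivity of $t\mapsto t^{1/h}$, the identity $v^{1/h}\log(h^hv)=V\log V$, and $\log t\le t/e$ together give $(1-1/e)\,y\le U+V\log V$. This holds for \emph{every} solution $x>1$, so you need neither the ``largest solution'' hypothesis nor any threshold on $v$.

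Your concern about small $V$ is unnecessary. Passing from $y<U+V\log V+y/2$ to $y<2(U+V\log V)$ is pure algebra, valid whatever the sign of $V\log V$. It also forces $U+V\log V>y/2>0$, so taking $h$-th powers is legitimate. Hence the first term alone bounds $x$ whenever $v>0$ and $x>1$, and the $2e^2$ term is only needed when $v=0$ or $x\le 1$.

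Your backup branch is also correct. However, the inequality it actually relies on is $e^2(\log(2e^2)-1)\le 2e^2$, i.e.\ $\log 2\le 1$, not $\log(2e^2)<e^2$.

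There is one small slip in the trivial case. If $h$ is an even integer and $0<x<1$, then $(\log x)^h>0$, so ``$x\le u$'' fails. The case is still fine, because $x\le 1<2^h(2e^2)^h$ settles it directly.
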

We now record some results proved by Luca and Noubissie \cite {LucaNoubissieCombinationfact} regarding the sequence  $\{u_n\}$ for later use.% Put
%\begin{equation}\label{Y}
    %Y:=\max \left\{\left|r_{1}\right|,\left|r_{2}\right|,\left|r_{3}\right|,\left|u_{0}\right|,\left|u_{1}\right|,\left|u_{2}\right|, %11 \right\} .
%\end{equation}
\begin{lemma}\label{abc}
    \cite[Lemma 3.1] {LucaNoubissieCombinationfact} The numerator of $a,b,c$ in eq. \eqref{u_n} is at most $4Y^3$ and the denominator is at most $Y^3$. In particular, 
    \begin{equation*}
        \text{max}\{h(a),h(b),h(c)\}\leq log(4Y^3).
    \end{equation*}
\end{lemma}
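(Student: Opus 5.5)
We need to prove Lemma \ref{abc}: the numerators of $a,b,c$ are at most $4Y^3$ in absolute value, the denominators are at most $Y^3$, and hence $\max\{h(a),h(b),h(c)\}\le\log(4Y^3)$. The plan is to solve for $a,b,c$ explicitly from the initial values by Cramer's rule and then bound the numerator and denominator directly.

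First we express the roots through $Y$. Since $f(X)=(X-\alpha)^2(X-\beta)$ with one root equal to $1$, the coefficient identities give $r_1=2\alpha+\beta$, $r_2=-(\alpha^2+2\alpha\beta)$ and $r_3=\alpha^2\beta$. If $\beta=1$, then $|\alpha|^2=|r_3|\le Y$. If $\alpha=1$, then $|\beta|=|r_3|\le Y$. In either case $|\alpha|,|\beta|\le Y$, and every quantity that appears below is a small polynomial in $\alpha,\beta,u_0,u_1,u_2$.

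Next we set up the linear system. Evaluating \eqref{u_n} at $n=0,1,2$ gives
\[
u_0=c+b,\qquad u_1=(a+c)\alpha+b\beta,\qquad u_2=(2a+c)\alpha^2+b\beta^2 .
\]
The coefficient matrix in the unknowns $(a,c,b)$ has rows $(0,1,1)$, $(\alpha,\alpha,\beta)$ and $(2\alpha^2,\alpha^2,\beta^2)$. Expanding, its determinant is $\Delta=-\alpha(\alpha-\beta)^2$, which is nonzero because $\alpha\beta\ne0$ (as $r_3\ne0$) and $\alpha\ne\beta$. Cramer's rule then writes each of $a,b,c$ as an integer numerator divided by $\Delta$.

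Finally we bound $\Delta$ and the three numerators. We compute the numerators explicitly. For example, $b\,(\alpha-\beta)^2=u_2-2\alpha u_1+\alpha^2u_0$, and similar expressions hold for $a$ and $c$ once one divides out the common factor $\alpha$. We then split into the cases $\alpha=1$ and $\beta=1$ and use $|\alpha|,|\beta|,|u_i|\le Y$ in each to get denominators $\le Y^3$ and numerators $\le 4Y^3$.

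The main obstacle is getting exactly these constants. A naive bound gives, for instance, $|u_2-2\alpha u_1+\alpha^2u_0|\le Y+2Y^2+Y^3$ and $(\alpha-\beta)^2\le 4Y^2$, which exceed the targets. To fix this, in the case $\beta=1$ we use the sharper bound $|\alpha|\le\sqrt Y$, and we cancel common factors of $\alpha$ before bounding. We also use $Y\ge 11$, which absorbs lower-order terms (e.g. $Y+2Y^2\le Y^3$, so the sum above is at most $2Y^3$).

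The height bound then follows at once. For a rational number $p/q$ in lowest terms, $h(p/q)=\log\max\{|p|,|q|\}$. Reducing a fraction only shrinks its numerator and denominator, so each of $h(a),h(b),h(c)$ is at most $\log\max\{4Y^3,Y^3\}=\log(4Y^3)$.
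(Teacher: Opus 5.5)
The paper does not prove this lemma; it quotes it from Luca and Noubissie [Lemma 3.1], so there is no argument in the present paper to compare against. Your Cramer's-rule argument is correct and self-contained, and it is the natural way to get these bounds. Solving the system gives
\[
a=\frac{u_2-(\alpha+\beta)u_1+\alpha\beta u_0}{\alpha(\alpha-\beta)},\qquad b=\frac{u_2-2\alpha u_1+\alpha^2u_0}{(\alpha-\beta)^2},\qquad c=\frac{(\beta^2-2\alpha\beta)u_0+2\alpha u_1-u_2}{(\alpha-\beta)^2}.
\]
In the two cases $\alpha=1$ (where $|\beta|\le Y$) and $\beta=1$ (where $|\alpha|\le\sqrt Y$), every denominator is at most $(Y+1)^2\le Y^3$. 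Every numerator is at most $Y^3+2Y^2+3Y\le 4Y^3$; the extreme case is the numerator of $c$ when $\alpha=1$.

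Two small corrections to your write-up. First, after cancelling, $a$ has denominator $\alpha(\alpha-\beta)$, not $(\alpha-\beta)^2$. Second, the naive bounds you say ``exceed the targets'' in fact do not: since $Y\ge 11$, we have $Y+2Y^2+Y^3\le 4Y^3$ and $4Y^2\le Y^3$. The sharper bound $|\alpha|\le\sqrt Y$ is only genuinely needed if you keep the full determinant $|\Delta|=|\alpha|(\alpha-\beta)^2$ as the common denominator in the case $\beta=1$.
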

\begin{lemma}\label{u_n=0}
    \cite[Lemma 3.2] {LucaNoubissieCombinationfact} If $u_n=0$, then $n < 39Y \log{Y}$.
\end{lemma}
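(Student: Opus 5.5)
The plan is to work directly from the Binet-type formula \eqref{u_n}. If $u_n=0$, then $(an+c)\alpha^n=-b\beta^n$. By Lemma \ref{abc} the numbers $a,b,c$ are rationals whose numerators are at most $4Y^3$ and whose denominators are at most $Y^3$. I will multiply through by a common denominator $d\le Y^9$ to obtain integers $a',b',c'$ with $|a'|,|b'|,|c'|\le 4Y^{9}$ (a crude bound suffices), satisfying
\[
(a'n+c')\alpha^n=-b'\beta^n .
\]

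First I dispose of the degenerate possibilities. I take as given that non-degeneracy means the recurrence is minimal of order $3$, so that both characteristic values genuinely occur in \eqref{u_n}; hence $a\neq0$ and $b\neq0$. Then $a'n+c'=0$ is impossible, since it would force $b'\beta^n=0$. Moreover $r_3\neq0$ gives $\alpha\beta\neq0$, and the integrality of $\alpha,\beta$ together with $\alpha/\beta\neq\pm1$ gives $|\alpha|\neq|\beta|$. Since one of $\alpha,\beta$ equals $1$, the other has absolute value at least $2$.

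\emph{Case $\beta=1$, $|\alpha|\ge2$.} Here $|a'n+c'|\,|\alpha|^n=|b'|$ with $a'n+c'$ a nonzero integer. So $2^n\le|b'|\le 4Y^9$, which gives $n\le \log(4Y^9)/\log 2<39Y\log Y$ because $Y\ge 11$.

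\emph{Case $\alpha=1$, $|\beta|\ge2$.} Here $|a'n+c'|=|b'|\,|\beta|^n\ge 2^n$, while $|a'n+c'|\le 4Y^9(n+1)$. Hence $n\log2\le\log(4Y^9)+\log(n+1)$. Lemma \ref{4.3} (with $h=1$), or a direct elementary estimate, then yields $n\ll\log Y$, which again lies well below $39Y\log Y$ for $Y\ge11$.

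I expect the main obstacle to be justifying that $b\neq0$ (equivalently, excluding $a'n+c'=0$) from the non-degeneracy hypothesis. Without it, the bound $n\le|c'/a'|$ could be as large as a power of $Y$ and would not fit under $39Y\log Y$. If minimality cannot be invoked cleanly, my fallback is to express $a,b,c$ explicitly in terms of $u_0,u_1,u_2,\alpha,\beta$ by solving the $3\times3$ Vandermonde-type system. The goal would be to show that $an+c=0$ with $b=0$ forces $u_n$ to vanish identically in $n$ along the double root, contradicting the hypotheses. The remaining numerical verification that the resulting logarithmic bounds are below $39Y\log Y$ is routine.
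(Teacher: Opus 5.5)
Your argument is correct, and it takes a different route from the one the paper relies on. The paper gives no proof here; it imports Lemma 3.2 of \cite{LucaNoubissieCombinationfact}, which is stated for an arbitrary double-root ternary recurrence. In that setting neither root need be $1$, and $|\alpha/\beta|$ can be close to $1$, so any size-comparison bound has to pay a factor of roughly $1/\log|\alpha/\beta|$. That is the kind of loss reflected in the shape $39Y\log Y$.

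You instead use this paper's standing hypothesis that one root equals $1$. The other root is then an integer of absolute value at least $2$, and comparing absolute values in $(a'n+c')\alpha^n=-b'\beta^n$ gives $n=O(\log Y)$, which is much stronger than the stated bound. The bookkeeping is sound, including $|a'|,|b'|,|c'|\le 4Y^9$, which holds because $d/d_a\le d_bd_c\le Y^6$. What the cited lemma buys is generality (no root equal to $1$); what yours buys is a short, elementary proof and a much sharper bound in the only setting where the paper uses it.

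The one soft spot is $b\neq 0$. The paper only records $a\neq 0$, so reading non-degeneracy as minimality of the order-$3$ recurrence is an added assumption. It is standard, and Lemma \ref{un>alpha} tacitly needs it when $|\beta|>|\alpha|$, but you do not actually need it.

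Your fallback as described would not work. If $b=0$, then $u_n=(an+c)\alpha^n$ vanishes at exactly one index, not identically. Your worry that $-c/a$ could be a large power of $Y$ is also an artefact of the crude height bounds.

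Handle $b=0$ directly instead. In that case $u_2-\alpha u_1=a\alpha^2$ is a nonzero integer, since $a\neq 0$ and $r_3=\alpha^2\beta\neq 0$. Now $u_N=0$ forces $c=-aN$, so $u_2=(2a+c)\alpha^2=a\alpha^2(2-N)$. This gives $N\le |u_2|+2\le Y+2<39Y\log Y$. With this one extra case the proof is complete, with no interpretive assumption about non-degeneracy.
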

\begin{lemma} \label{un>alpha}
    \cite[Lemma 3.3] {LucaNoubissieCombinationfact} Assume $n>Y^8$. If $|\beta|> |\alpha|$, then 
    \[
    |u_n|> \frac{|\beta|^{n}}{2Y^3}.
    \]
 Otherwise, if $|\alpha|>|\beta|$, then 
    \[
    |u_n|>\frac{n|\alpha|^n}{6Y^3}.
    \]
\end{lemma}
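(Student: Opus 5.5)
The plan is to compare the sizes of the two terms in \eqref{u_n}, $u_n=(an+c)\alpha^n+b\beta^n$, using the triangle inequality together with the height/denominator control from Lemma \ref{abc}. The following preliminary facts are all I need.

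\begin{enumerate}
\item Since one of $\alpha,\beta$ equals $1$ and $r_3=\alpha^2\beta$ up to sign, we get $|\beta|\le |r_3|\le Y$ and $|\alpha|^2\le Y$. In particular $\gamma\le Y$.
\item The hypothesis $\alpha/\beta\neq\pm1$ gives $|\alpha|\neq|\beta|$. Since both are integers, the smaller modulus divided by the larger is at most $1-1/\gamma\le 1-1/Y$. Hence $\bigl(\min\{|\alpha|,|\beta|\}/\max\{|\alpha|,|\beta|\}\bigr)^n\le e^{-n/Y}$.
\item By Lemma \ref{abc}, every nonzero one of $a,b,c$ has absolute value at least $1/Y^3$ (its denominator is at most $Y^3$), and all have absolute value at most $4Y^3$. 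Also $a\neq0$ by \eqref{u_n}. Non-degeneracy forces $b\neq0$; otherwise $u_n$ would satisfy a binary recurrence, contradicting that $f$ is the minimal characteristic polynomial.
\end{enumerate}

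\textbf{Case $|\beta|>|\alpha|$.} I write $|u_n|\ge |b||\beta|^n-(|a|n+|c|)|\alpha|^n$. It suffices to show
\[
(|a|n+|c|)(|\alpha|/|\beta|)^n\le \tfrac{1}{2}|b|,
\]
which is implied by $8Y^3 n\,e^{-n/Y}\le 1/(2Y^3)$, i.e.\ $16Y^6 n\le e^{n/Y}$. For $n>Y^8$ the right side is at least $e^{Y^7}$, and the map $n\mapsto n e^{-n/Y}$ is decreasing for $n>Y$. So the inequality holds comfortably, using $Y\ge 11$. This gives $|u_n|\ge \tfrac12|b||\beta|^n\ge |\beta|^n/(2Y^3)$.

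\textbf{Case $|\alpha|>|\beta|$.} Here $|u_n|\ge (|a|n-|c|)|\alpha|^n-|b||\beta|^n$. First,
\[
|a|n-|c|\ge \frac{n}{Y^3}-4Y^3=\frac{n}{Y^3}\Bigl(1-\frac{4Y^6}{n}\Bigr)\ge \frac{n}{Y^3}\Bigl(1-\frac{4}{Y^2}\Bigr),
\]
using $n>Y^8$, and this is at least $0.96\,n/Y^3$. Second, $|b||\beta|^n\le 4Y^3e^{-n/Y}|\alpha|^n$, which is negligible: it is far less than $0.79\,n|\alpha|^n/Y^3$ by the same exponential estimate as before. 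Subtracting leaves more than $n|\alpha|^n/(6Y^3)$.

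The only step needing any care is the uniform ratio bound $(|{\min}|/|{\max}|)^n\le e^{-n/Y}$. It rests on $\alpha,\beta$ being integers of distinct absolute value bounded by $Y$, and I would verify that bound on the roots first. The remaining inequalities are routine numerical checks with $Y\ge11$.
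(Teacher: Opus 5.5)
Your proof is correct. The paper does not prove this lemma itself; it quotes it from \cite[Lemma 3.3]{LucaNoubissieCombinationfact}, so there is no in-paper argument to compare with. Your dominant-term argument is the natural way to establish it: the reverse triangle inequality, together with $1/Y^3\le |x|\le 4Y^3$ for the nonzero coefficients $x\in\{a,b,c\}$ from Lemma \ref{abc}. It also works without the hypothesis that one root equals $1$, since you only use that $\alpha,\beta$ are nonzero integers with $|\alpha|\neq|\beta|$ and $\gamma\le Y$.

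In the present setting the estimates simplify. If $|\beta|>|\alpha|$, then necessarily $\alpha=1$ and $|\beta|\ge 2$, so the ratio $|\alpha|/|\beta|$ is at most $1/2$. In the case $|\alpha|>|\beta|$ no decay is needed at all, because $|b||\beta|^n\le 4Y^3|\alpha|^n<0.79\,n|\alpha|^n/Y^3$ already follows from $n>Y^8$.

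Two minor remarks.

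First, $b\neq 0$ does not follow from non-degeneracy in the usual sense (no quotient of distinct roots is a root of unity). It is an implicit assumption that the order-three recurrence is minimal, and your first case genuinely depends on it. For example, $u_n=n$ satisfies $u_n=4u_{n-1}-5u_{n-2}+2u_{n-3}$, whose characteristic polynomial is $(X-1)^2(X-2)$. Here $b=0$ and $Y=11$, yet $|u_n|=n<2^n/(2Y^3)$ for large $n$. So you were right to invoke minimality explicitly.

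Second, as written your first case only gives $|u_n|\ge|\beta|^n/(2Y^3)$. The strict inequality follows because $16Y^6n<e^{n/Y}$ holds strictly for $n>Y^8$, so $(|a|n+|c|)(|\alpha|/|\beta|)^n<|b|/2$.
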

%Statement and proof of generalization of Lemma 5.2 \\
The next three lemmas will be used in the proof of Theorem \ref{th:specfic_p}.

\begin{lemma}\label{L:ineq_nm1}
    Let $n, m_1, m_2,s $ be positive integers as in \eqref{m1m2s} with $m_1\geq 6$. Then $n>m_1$.
\end{lemma}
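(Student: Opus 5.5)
We argue by contradiction: assume $n\le m_1$ with $m_1\ge 6$, and compare sizes. The equation $C_n=m_1!+m_2!+s$ has $s\ge 1$ and $m_2!\ge 1$, so
\[
n2^n+1 = C_n > m_1! \ge n!\cdot\frac{m_1!}{n!}.
\]
The plan is to show that $m_1!$ exceeds $C_n$ whenever $n\le m_1$. Then the right-hand side of \eqref{m1m2s} is strictly larger than the left, which is impossible.

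The key step is the inequality $m!>m2^m+1=C_m$ for every $m\ge 6$. We check the base case directly: $6!=720$, while $C_6=6\cdot 64+1=385$. For the induction step, passing from $m$ to $m+1$ multiplies the factorial by $m+1\ge 7$. It multiplies $C_m-1=m2^m$ by $2(m+1)/m\le 7/3<7$. Hence $(m+1)!>7\cdot(m2^m+1)>(m+1)2^{m+1}+1$, using $7m2^m+7\ge 2(m+1)2^m+1$, which is clear. Thus $m!>C_m$ for all $m\ge 6$.

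Now take $n\le m_1$. Since $C_n$ is increasing in $n$ for $n\ge 1$, we get $C_n\le C_{m_1}<m_1!$. Therefore
\[
m_1!+m_2!+s>m_1!>C_n,
\]
which contradicts \eqref{m1m2s}. The same argument covers $n=0$, since $C_0=1<720$. Hence $n>m_1$.

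There is no real obstacle here. The only care needed is the base case check at $m=6$: the analogous inequality fails at $m=5$, since $5!=120<161=C_5$, which explains the hypothesis $m_1\ge 6$. We also need the monotonicity of $C_n$, which is immediate from $C_{n+1}-C_n=(n+2)2^n>0$.
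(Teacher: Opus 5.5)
Your proof is correct and is essentially the paper's argument: the paper observes $n2^n>m_1!>m_12^{m_1}$ and concludes $n>m_1$ from the monotonicity of $x2^x$. You argue by contradiction with $C_m$ in place of $m2^m$, and you also supply the induction proving $m!>C_m$ for $m\ge 6$, which the paper leaves implicit.
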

\begin{proof}
    By eq. \eqref{m1m2s}, we obtain $n2^n > m_1!$ as $m_2!+s > 1$. Further, %using Stirling's formula,  
    \[
         n2^n>m_1!>m_12^{m_1},
    \]
   % Taking logarithms on both sides.
    %\[
   % 2n\log{n} > 2m_1 \log{m_1}, %> \frac{m_1 \log{m_1}}{2},
   % \]
    implying that
    \[
    n> m_1.
    \]
\end{proof}
The case $k=1$ of the following lemma was established in \cite[Lemma 5.2]{LucaNoubissieCombinationfact}.
\begin{lemma}\label{p-1sol}
  Let $k$ be a positive integer. For each fixed integer $t$ and odd prime $p$ there are exactly $p-1$ numbers $n$ in $\{0,1,\ldots,p^k(p-1)-1\}$ such that $p^k$ divides $n2^n-t$.
\end{lemma}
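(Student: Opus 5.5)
We argue by induction on $k$, via a Hensel-lifting argument. Throughout, $p$ is odd and $t$ is fixed. Write $g(n)=n2^n-t$. The key observation is that $2^{n}$ modulo $p^k$ depends only on $n \bmod \operatorname{ord}_{p^k}(2)$, which divides $p^{k-1}(p-1)$. Hence $g(n) \bmod p^k$ depends only on $n \bmod p^k(p-1)$: if $n\equiv n' \pmod{p^k(p-1)}$, then $n\equiv n'\pmod{p^k}$ and $2^n\equiv 2^{n'}\pmod{p^k}$. So the count in $\{0,\ldots,p^k(p-1)-1\}$ is well defined modulo $p^k(p-1)$. By the Chinese remainder theorem we parametrize $n$ by the pair $(n \bmod p^k,\ n\bmod (p-1))$.

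\emph{Base case $k=1$.} Write $n\equiv x\pmod p$ and $n\equiv y\pmod{p-1}$. Then $n2^n\equiv x\,2^{y}\pmod p$, since $2^{p-1}\equiv 1\pmod p$. For each $y\in\{0,\ldots,p-2\}$, the congruence $x2^y\equiv t\pmod p$ determines $x$ uniquely modulo $p$, because $2^y$ is invertible. This gives exactly $p-1$ pairs $(x,y)$, hence exactly $p-1$ residues $n$ modulo $p(p-1)$. This also recovers the case established in \cite[Lemma 5.2]{LucaNoubissieCombinationfact}.

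\emph{Inductive step.} Suppose $n_0\in\{0,\ldots,p^k(p-1)-1\}$ satisfies $p^k\mid g(n_0)$. Its lifts modulo $p^{k+1}(p-1)$ are $n=n_0+jp^k(p-1)$ for $j=0,\ldots,p-1$. We compute $g(n)$ modulo $p^{k+1}$. Put $M=p^k(p-1)$. Since $\operatorname{ord}_{p^{k+1}}(2)$ divides $p^k(p-1)$, we have $2^{M}\equiv 1\pmod{p^{k+1}}$, so
\[
g(n_0+jM)\equiv (n_0+jM)2^{n_0}-t = g(n_0)+jM2^{n_0}\pmod{p^{k+1}}.
\]
Dividing by $p^k$, the condition $p^{k+1}\mid g(n)$ becomes
\[
g(n_0)/p^k + j(p-1)2^{n_0}\equiv 0\pmod p.
\]
The coefficient $(p-1)2^{n_0}$ is a unit mod $p$, so exactly one $j\in\{0,\ldots,p-1\}$ works. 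Conversely, any $n$ with $p^{k+1}\mid g(n)$ reduces modulo $M$ to some $n_0$ with $p^k\mid g(n_0)$. Therefore the solutions modulo $p^{k+1}(p-1)$ are in bijection with those modulo $p^k(p-1)$, and there are exactly $p-1$ of them.

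The step needing the most care is the expansion in the inductive step: we must confirm that the periodic factor $2^{jM}$ is trivial modulo $p^{k+1}$, and not merely modulo $p^k$. This holds because $|(\mathbb{Z}/p^{k+1})^*|=p^k(p-1)=M$. The argument also uses that $p$ is odd, so that $2$ is a unit modulo $p$.
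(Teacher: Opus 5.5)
Your proof is correct and follows the same Hensel-lifting induction as the paper: each solution $n_0$ modulo $p^k(p-1)$ is lifted to $n_0+jp^k(p-1)$, the factor $2^{jp^k(p-1)}$ is removed modulo $p^{k+1}$ by Euler's theorem, and the resulting linear congruence in $j$ has a unique solution because its coefficient has $p$-adic valuation exactly $k$. Your version is cleaner than the paper's. Observing directly that $2^{n_0}$ is a unit avoids the paper's case split on $\nu_p(t)$ and its decomposition $a_{j1}=p^jb_{j1}+p^iy_{j1}$, and you prove the base case $k=1$ via the Chinese remainder theorem instead of citing it.
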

\begin{proof}
  % \textcolor{blue}{Suppose $\nu_p(t) \geq k$. Then $\nu_p(n2^n-t)\geq k $ if and only if $v_p(n)\geq k$. So, there are exactly $p-1$ numbers, namely $0,p^k,2p^k,\ldots,(p-2)p^k$, in $\{ 0,1,\ldots, p^{k}(p-1)-1\}$ such that $p^{k}$ divides ${n2^n -t}$. Therefore, we can now assume that $\nu_p(t) < k$.}

    We prove the result by induction on $k$. The lemma holds for $k=1$ by
     \cite[Lemma 5.2]{LucaNoubissieCombinationfact}. Let $j\geq 1$.
    Suppose that there are exactly $p-1$ numbers $n$ in $\{0, 1,\ldots, p^{j}(p-1)-1\}$ such that $p^j$ divides $n2^n-t$. We denote them as $a_{j1}, a_{j2}, \ldots, a_{j(p-1)}$.    So, there are exactly $p(p-1)$ numbers, namely  $a_{j1},a_{j1}+p^j(p-1),a_{j1}+2p^j(p-1),\ldots, a_{j1}+p^j(p-1)^2, \ldots, a_{j(p-1)}, a_{j(p-1)}+p^j(p-1),\ldots, a_{j(p-1)}+p^j(p-1)^2$ in $\{0, 1,\ldots, p^{j+1}(p-1)-1\}$ such that $p^j$ divides $n2^n-t$. %We denote them as $a_{j1},a_{j1}+p(p-1),a_{j1}+2p(p-1),\ldots, a_{j1}+p(p-1)^2, \ldots, a_{j(p-1)}, a_{j(p-1)}+p(p-1),\ldots, a_{j(p-1)}+p(p-1)^2$.
    
    Let $i$ denote $\nu_p(t)$. If $i \geq j$, then $\nu_p(a_{j1}2^{a_{j1}}-t)\geq j $ if and only if $v_p(a_{j1})\geq j$. So, there is exactly one number $n$ in  \{$a_{j1},a_{j1}+p^j(p-1),a_{j1}+2p^j(p-1),\ldots, a_{j1}+p^j(p-1)^2$\}\ such that $p^{j+1}$ divides ${n2^n-t}$. 
    
    %If $i=j$, then for $a_{j1}2^{a_{j1}}-t$ to be divisible by $p^j$, we must have $v_p(a_{j1})\geq j$. So,  there is exactly one number  $ n \in \{a_{j1},a_{j1} + (p-1)p^j, \ldots , a_{j1} + (p-1)^2p^j\}$ such that $p^{j+1}$ divides ${n2^n-t}$.  \\   
    Suppose $i < j$. Since $a_{j1}2^{a_{j1}}-t$ is divisible by $p^j$, we have $v_p(a_{j1}) = i$. Dividing $a_{j1}$ by $p^j$, we can write
    \begin{equation}\label{E:aj1_quotrem}
    a_{j1}=p^jb_{j1}+p^iy_{j1} 
    \end{equation}
    for some integers $b_{j1}, y_{j1}$ with $y_{j1} \equiv 1,\ldots ,\,p-1 \pmod{p}$. Let $v$ denote the least residue of $b_{j1}$ modulo $p$. (We suppress the dependence on $j$ for the sake of simplicity.) Then
    \begin{equation*}%\label{eq:defn_x}
        x:=\nu_p(b_{j1}-v) > 0.
    \end{equation*}
    Using \eqref{E:aj1_quotrem}, write
    \[
    a_{j1}2^{a_{j1}}-t= p^j(b_{j1}-v)2^{p^jb_{j1}+p^iy_{j1}} + (p^iy_{j1}+vp^j)2^{p^jb_{j1}+p^iy_{j1}} - t.
    \]
    Since $p^j$ divides $a_{j1}2^{a_{j1}}-t$, we have
    \[
       (p^iy_{j1}+vp^j)2^{p^jb_{j1}+p^iy_{j1}}-t \equiv 0 \pmod{p^j}.
       \]     
    Further, 
        \begin{equation}\label{E:a2_temp}
a_{j1}2^{a_{j1}}-t\equiv (p^iy_{j1}+vp^j)2^{p^jb_{j1}+p^iy_{j1}} - t  \pmod{p^{j+1}}
        \end{equation}
 as $x$ is positive. Thus, there is a unique integer $d \in \{0, 1,\ldots, p-1\}$ such that 
    \begin{align}\label{E:inverse_sub}
    \begin{split}
    (p^iy_{j1}+vp^j)2^{p^jb_{j1}+p^iy_{j1}}-t &\equiv dp^j \pmod{p^{j+1}}\\ 
    \textrm{i.e.,\ }  p^i2^{p^jb_{j1}+p^iy_{j1}} &\equiv (t+dp^j)(y_{j1}+vp^{j-i})^{-1} \pmod{p^{j+1}}.
  \end{split}
    \end{align}
    (Note that $\gcd(p,y_{j1}+vp^{j-i})=1$.) We claim that there is exactly one number $n \in \{a_{j1},a_{j1} + (p-1)p^j, \ldots , a_{j1} + (p-1)^2p^j\}$ such that $p^{j+1}$ divides ${n2^n-t}$.  Let $c \in \{0, 1,\ldots, p-1\}$. Then, as noted earlier, $p^j$ divides 
       \[
       (a_{j1}+cp^j(p-1))2^{a_{j1}+c(p-1)p^j}-t.
       \]
Using \eqref{E:a2_temp}, \eqref{E:inverse_sub} and the consequence of Euler's theorem that $2^{\varphi(p^{j+1})}\equiv 1\pmod{p^{j+1}}$, we find
        \begin{align*}
          &  (a_{j1}+c(p-1)p^j)2^{a_{j1}+c(p-1)p^j}-t\\
          % &=(p^jb_{j1}+ p^iy_{j1} +c(p-1)p^j) 2^{p^j b_{j1}+ p^iy_{j1}+c (p-1)p^j}-t\\
          % &=p^j(b_{j1}-v)2^{p^jb_{j1}+ p^iy_{j1}+ c(p-1)p^j} + (p^iy_{j1}+(v+c(p-1))p^j)2^{p^jb_{j1}+ p^iy_{j1} + c(p-1)p^j} - t\\
 %          &\equiv (p^iy_{j1}+(v+c(p-1))p^j)2^{p^jb_{j1}+ p^iy_{j1} + c(p-1)p^j} - t \pmod{p^{j+1}}.
 %        \end{align*}
 %    Now, using ,  
 % \begin{align*}
        & \equiv(p^iy_{j1} + (v + c(p-1)p^j))2^{p^jb_{j1}+p^iy_{j1}+c(p-1)p^j} -t \\
         & \equiv (p^iy_{j1}+vp^j)2^{p^jb_{j1}+p^iy_{j1}}2^{c(p-1)p^j} + c(p-1)p^j2^{p^jb_{j1}+p^iy_{j1}}2^{c(p-1)p^j} -t \\
          &\equiv  dp^j + c(p-1)p^{j-i}(y_{j1}+vp^{j-i})^{-1}(t+dp^j)   \pmod{p^{j+1}}.
    \end{align*}
%where Euler's theorem has been deployed to conclude that $2^{\varphi(p^{j+1})}\equiv 1\pmod{p^{j+1}}$. \\
     For fixed $d$, the congruence $c(p-1)p^{j-i}{(y_{j1}+vp^{j-i})}^{-1}(t+dp^j)  \equiv -dp^j  \pmod{p^{j+1}} $ holds for a unique $c\in \{0, 1,\ldots, p-1\}$ as $p^j$ is the highest power of $p$ dividing the coefficient of $c$. This proves the claim.
Hence, there is exactly one number $n \in \{a_{j1},a_{j1} + (p-1)p^j, \ldots , a_{j1} + (p-1)^2p^j\}$ such that $p^{j+1}$ divides ${n2^n-t}$.

    Similarly, there are exactly $p-2$ numbers $n \in \{a_{j2},a_{j2} + (p-1)p^j, \ldots , a_{j2} + (p-1)^2p^j,a_{j3}, \ldots, a_{j(p-1)}+(p-1)^2p^j\}$ such that $p^{j+1}$ divides ${n2^n-t}$.
    
    So, by the Principle of Mathematical Induction, there are exactly $p-1$ numbers $n $ in $\{ 0,1,\ldots, p^k(p-1)-1\}$ such that $p^k$ divides ${n2^n -t}$. 
 
\end{proof}
%============================================
%============================================
The following lemma summarises the procedure in the proof of \cite[Lemma 5.1] {LucaNoubissieCombinationfact}.
\begin{lemma} \label{njpj}
    Let $p$ be an odd prime and $N,t$ be integers with $N>0$. Let $n_0$ be one of the numbers in $\{0,1,\ldots,p(p-1)-1\}$ such that $p$ divides $n_02^{n_0}+1-t$ (cf. Lemma \ref{p-1sol}). For $j\geq 1$, define 
      \begin{align*}
        & n_{j-1} = n_0 + (p-1)pl_1 + \cdots (p-1)p^{j-1}l_{j-1}, \\
        & l_j \equiv 2^{-n_{j-1}} \bigg{(} \frac{n_{j-1}2^{n_{j-1}}+1-t}{p^j}  \bigg{)} \pmod{p}.
    \end{align*}
    Suppose $J$ is the smallest integer such that $n_{J}>N$. Then $\nu_p(C_n-t)< J$ for all $n\leq N$.
    %Set $k=\nu_p (C_{n_{J-1}}-t)$. Then $\nu_p(C_n-t)\leq k$ for all $n\leq N$.
\end{lemma}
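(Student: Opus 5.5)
We will prove the statement for all $n\le N$ by first reducing to one residue class and then giving, inside that class, an exact description of the $n$ for which $p^{j}\mid C_n-t$. Since $C_n-t=n2^n+1-t$, Lemma \ref{p-1sol} (with $k=1$ and $t-1$ in place of $t$) shows that $p\mid C_n-t$ forces $n$ to lie in one of exactly $p-1$ residue classes modulo $p(p-1)$. For $n$ outside all of these classes, $\nu_p(C_n-t)=0$. This is $<J$ as soon as $J\ge1$, and $J\ge 1$ holds whenever $N\ge n_0$, because $n_0<p(p-1)$. So it suffices to treat the $n\le N$ lying in a given class, which we may take to be the class of the given $n_0$. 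We read the lemma as applied to each of the $p-1$ starting values $n_0$ in turn, with $J$ taken to be the largest of the resulting indices; this is how the procedure of \cite[Lemma 5.1]{LucaNoubissieCombinationfact} is used.

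\textbf{Main claim.} By induction on $j\ge1$ we will show two things. First, $p^{j}\mid C_{n_{j-1}}-t$, so that $l_j$ is well defined. Second, an integer $n\equiv n_0 \pmod{p(p-1)}$ satisfies $p^j\mid C_n-t$ if and only if $n\equiv n_{j-1}\pmod{p^{j}(p-1)}$. The case $j=1$ is the choice of $n_0$.

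\textbf{Inductive step.} Write $n=n_{j-1}+(p-1)p^{j}l$ and use $2^{(p-1)p^{j}}\equiv1\pmod{p^{j+1}}$ (Euler). This gives
\[
C_n-t\equiv C_{n_{j-1}}-t+(p-1)p^{j}l\,2^{n_{j-1}}\equiv p^{j}\Bigl(\tfrac{C_{n_{j-1}}-t}{p^{j}}-l\,2^{n_{j-1}}\Bigr)\pmod{p^{j+1}}.
\]
Hence $p^{j+1}\mid C_n-t$ exactly when $l\equiv l_j\pmod p$, that is, when $n\equiv n_j\pmod{p^{j+1}(p-1)}$. This matches the uniqueness statement contained in the proof of Lemma \ref{p-1sol}. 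Because $0\le l_i\le p-1$, we also get $0\le n_{j-1}<p^{j}(p-1)$, and the sequence $(n_j)$ is nondecreasing.

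\textbf{Conclusion.} Suppose $n\le N$ lies in the class of $n_0$ and $\nu_p(C_n-t)\ge J$. By the main claim, $n\equiv n_{J-1}\pmod{p^{J}(p-1)}$. We then want to show that $n\ge n_J$, which would contradict $n_J>N\ge n$.

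This last comparison is the step I expect to be the real obstacle, and it is where the indexing has to be handled carefully. The main claim only gives $n\ge n_{J-1}$, and $n_{J-1}\le N$ by the minimality of $J$. So the argument as sketched yields the strict bound for every $n\le N$ except possibly $n=n_{J-1}$ itself, where $\nu_p(C_{n_{J-1}}-t)\ge J$ always holds.

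To get exactly the stated strict inequality, I would first recheck the indexing convention in the definition of $l_j$, namely whether $p^{j}$ or $p^{j+1}$ is meant. With the convention in which $n_{j}$ is the representative for the modulus $p^{j}(p-1)$, the same computation gives "$\nu_p\ge J$ implies $n\equiv n_J$", hence $n\ge n_J>N$, which is exactly the claimed $\nu_p(C_n-t)<J$. So the proof will fix that convention and run the induction above verbatim.
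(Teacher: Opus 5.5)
Your main claim and its inductive step are correct. This is the same Hensel-type lifting, driven by $2^{(p-1)p^{j}}\equiv 1\pmod{p^{j+1}}$, that underlies Lemma~\ref{p-1sol}. The paper itself gives no proof of this lemma, only a pointer to Luca--Noubissie, so there is no separate argument to compare with.

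The problem is your last step. The obstruction you found at $n_{J-1}$ is not an indexing convention that can be chosen away inside the lemma: it shows that the lemma, with the definitions exactly as stated, is false.
\begin{itemize}
\item By your main claim, $p^{j}\mid C_{n_{j-1}}-t$ for every $j\ge 1$.
\item If $J\ge 1$ then $n_{J-1}\le N$ by minimality.
\item Since $n_J=n_{J-1}+(p-1)p^{J}l_J$ and $n_J\neq n_{J-1}$, we have $l_J\neq 0$, so $\nu_p(C_{n_{J-1}}-t)=J$ exactly.
\end{itemize}
Concretely, take $p=3$, $t=0$, $n_0=2$. Then $C_2=9$, $l_1=0$, $n_1=2$, $l_2=1$, $n_2=20$. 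For every $2\le N<20$ we have $J=2$, yet $\nu_3(C_2)=2$.

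Moving $p^{j}$ to $p^{j+1}$ in the definition of $l_j$ is not an option either, since that quotient is in general not an integer. In the same example it gives $n_1=8$ and then $C_8/27=2049/27$.

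A smaller inaccuracy: the exceptional $n\le N$ are not just $n_{J-1}$. They are all $n\equiv n_{J-1}\pmod{p^{J}(p-1)}$ below $n_J$. In the example, $l_3=2$ and $n_3=128$; for $74\le N<128$, both $n=20$ and $n=74$ have $\nu_3(C_n)=3=J$.

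So do not silently re-index and present the outcome as exactly the claimed bound $\nu_p(C_n-t)<J$. Instead, state the corrected lemma, which is precisely what your main claim proves: for all $n\le N$ with $n\equiv n_0\pmod{p(p-1)}$,
\[
\nu_p(C_n-t)\le J,
\]
with equality at $n=n_{J-1}$. Equivalently, the strict inequality holds if $J$ is defined as the least index with $n_{J-1}>N$.

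Your reduction to the $p-1$ admissible classes is likewise a needed correction, not just a reading convention. The lemma fixes one $n_0$ but speaks about all $n\le N$. Taking the maximum of $J$ over the classes then bounds $\nu_p(C_n-t)$ for every $n\le N$.

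The off-by-one matters downstream. In the proof of Theorem~\ref{th:specfic_p} the paper infers $\nu_3(n2^n+1)<138$ from $n_{138}>10^{66}$. But for $n_0=2$ the printed value $n_{138}\approx 3.75\cdot 10^{66}$ forces $l_{138}=2$ and $n_{137}\approx 9.64\cdot 10^{65}<10^{66}$, at which $\nu_3(C_{n_{137}})=138$.
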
    
    % For a fixed prime $p$ and fixed integer $N$ such that $n<N$. Consider $n= n_0+p(p-1)l$for  $n_0$ as in [\cite{LucaNoubissieCombinationfact}, Lemma 5.2 ], where $l=l_1+l_2p+\cdots + l_kp^{k-1}+\cdots $, $l_i\in\{1,2,\ldots,p-1\}$ and $l_j,n_j$ are defined as  
  
    %  then 
    
    % the minimum value of $n$ such that $p^k$ divides $(C_n-t)$ can be determined, where $n =n_0 + p(p-1)l$ and 
    % \begin{align*}
    %     & l=l_1+l_2p+\cdots + l_kp^{k-1}+ \cdots, \qquad l_i \in \{1,2,\ldots,p-1\}, \\
    %     & n_{j-1} = n_0 + (p-1)pl_1 + \cdots (p-1)p^{j-1}l_{j-1}, \\
    %     & l_j \equiv 2^{-n_{j-1}} \bigg{(} \frac{n_{j-1}2^{n_{j-1}}+1-t}{p^j} \pmod{p} \bigg{)}.
    % \end{align*}
%============================================
%============================================

%====================================================
%\begin{lemma}\label{Luca}
 %   Let $p$ be a prime number. If $n>Y^8$, then 
  %     v_p(u_n)\leq 10^{12}\frac{p}{logp}(logp+logY)log^2n.
   % \end{equation*}
%\end{lemma}

As mentioned in Section \ref{sec_intro}, we prove an estimate for $\nu_p(u_n-t)$ in the following lemma.

\begin{lemma}\label{un<logn}
    Let $\{u_n\}_{n\geq 0}$ be a ternary recurrence sequence as in Theorem \ref{th:general}. Suppose $n>c_1$. Let $t\neq u_n$ be an integer. Further, suppose that $t \neq b$ if $\beta =1$ and $t \neq an+c$ if $\alpha =1$.
    Then for any prime $p$, we have 
\[
v_p(u_n-t) < 
 \begin{cases*}
     {} c_2 \, p \log^2{n} & \mbox{if t=0}, \\
{}  c_2 \, p \log^2{n} \log^+{t}  & \mbox{if $t\neq 0$},

\end{cases*}
\] 
where $\log^+{t}:=\max \{1,\log{|t|}\}$.
%where 
%\[
%c_2 := 1.33(10^{17})p \log^2{Y}. 
%\]
\end{lemma}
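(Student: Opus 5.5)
Write $u_n - t = (an+c)\alpha^n + b\beta^n - t$, with the constant part attached to whichever characteristic root equals $1$, and apply Yu's estimate (Lemma \ref{Yu}) to a suitable multiplicative linear form. The case $t=0$ is \cite[Lemma 3.4]{LucaNoubissieCombinationfact}, which already gives $\nu_p(u_n)<c_2\,p\log^2 n$, so we assume $t\neq 0$. There are two cases according to whether $\beta=1$ or $\alpha=1$.

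\emph{Case $\beta=1$.} Here $u_n-t=(an+c)\alpha^n+(b-t)$ with $b-t\neq 0$ by hypothesis, and $an+c\neq 0$ since otherwise $u_n=b$, which together with $n>c_1$ and Lemma \ref{u_n=0} (applied to the sequence $u_n-b$) is excluded. We write
\[
u_n-t=(b-t)\bigl(\eta_1\,\eta_2^{\,n}-1\bigr),\qquad \eta_1=\frac{-(an+c)}{b-t},\quad \eta_2=\alpha .
\]
Then $\nu_p(u_n-t)\le \nu_p(b-t)+\nu_p(\Lambda)$ with $\Lambda=\eta_1\eta_2^n-1\neq0$ because $t\neq u_n$. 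We apply Lemma \ref{Yu} with $\mathbb{K}=\mathbb{Q}$, $D=1$, $l=2$, $e_\pi=f_\pi=1$ and $B^*=n$. By Lemmas \ref{height} and \ref{abc},
\[
h(\eta_1)\le \log(4Y^3)+\log n+\log(4Y^3)+\log^+ t+2\log 2\ll \log n\,\log^+t
\]
(using $n>Y^8$), and $h(\eta_2)\le\log Y$. This gives
\[
\nu_p(\Lambda)\le C\,\frac{p}{\log^2 p}\max\{\log n\log^+t,\log p\}\max\{\log Y,\log p\}\log n .
\]
The term $\nu_p(b-t)\le \log|b-t|/\log p$ is harmless.

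\emph{Case $\alpha=1$.} Here $u_n-t=(an+c-t)+b\beta^n$ with $an+c-t\neq0$, and we use the same form with $\eta_1=-b/(an+c-t)$ and $\eta_2=\beta$. We add the valuation $\nu_p(an+c-t)\le \log(Y^3(|a|n+|c|+|t|))/\log p$, which is also of lower order.

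\emph{Cleaning up.} The main obstacle is the bookkeeping needed to reach precisely $c_2 p\log^2 n\log^+t$ with $c_2=2.02\cdot10^{12}\log^2 Y$, since the $\max$ with $\log p$ in the $H_j$ interferes. If $p$ is large, say $p>n$, then $|u_n-t|$ is small relative to $p^{\cdot}$: we have $\nu_p(u_n-t)\le \log|u_n-t|/\log p\ll n\log Y/\log n$, which is much less than $p\log^2 n$. So we may assume $p\le n$, so that $\log p\le \log n$, and absorb $\log p/\log^2 p\le 1/\log 2$. Evaluating the constant $19(20\sqrt3)^{6}\log(2e^5)\approx 1.3\cdot10^{12}$ and using $\log Y\ge\log 11$ to absorb the additive $\log 4Y^3$ and $\log 2$ terms then yields the stated bound.
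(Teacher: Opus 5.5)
Your proposal follows the paper's proof. You use the same split into $\beta=1$ and $\alpha=1$, and the same linear forms: your $\eta_1=-(an+c)/(b-t)$ is the paper's $(an+c)(t-b)^{-1}$, and analogously with $\eta_2=\beta$ when $\alpha=1$. You apply Yu's bound with $l=2$, $D=1$, $B^*=n$, and use the trivial estimate for $\nu_p(t-b)$, resp.\ $\nu_p(t-(an+c))$. Your reduction to $p\le n$, to control the $\log p$ inside $H_1$, is a detail the paper passes over.

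Two corrections to your final bookkeeping:
\begin{enumerate}
\item The constant is $19(20\sqrt3)^6\log(2e^5)=19\cdot 1200^3\,(5+\log 2)\approx 1.87\cdot 10^{11}$, not $1.3\cdot10^{12}$. It is this value that gives (essentially) $2.02\cdot10^{12}$, after multiplying by the factor $3$ from $h(\eta_1)\le 3\log(4Y^3)\log n\log^+t$ and using $\log(4Y^3)\le 3.6\log Y$.
\item At $p=2$ you have $H_2=\log Y$, so the factor is $1/\log^2 2\approx 2.08$, not $1/\log 2$. The paper silently replaces $p/\log^2 p$ by $p$ here. To stay within $c_2$ at $p=2$, use the sharper bound $h(\eta_1)\le \log(4Y^3)\log n\log^+ t$, which holds because $n>Y^8$.
\end{enumerate}
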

\begin{proof}The case $t=0$ was established in \cite[Lemma 3.4]{LucaNoubissieCombinationfact}. Now, suppose $t\neq0$. \\
    \textit{Case 1: Assume $\beta = 1$.} Using \eqref{u_n}, we get $u_n-t = (an+c)\alpha^n+b-t $. Let  $\Lambda = \frac{(an+c)\alpha^n}{(t-b)}-1$. (Note that $t\neq b$ and $\Lambda \neq 0$.) Then
    \[
        \nu_p(u_n-t) =  \nu_p(t-b) + \nu_p(\Lambda).
    \]
     To bound $\nu_p(\Lambda)$, we apply Lemma \ref{Yu} with the parameters  
    \begin{align*}
        &l=2,\quad \qquad\eta_1= (an+c)(t-b)^{-1}, \qquad \ \eta_2 =\alpha,  \\
       &d_1 = 1, \qquad \quad d_2=n, \qquad B^*=n   .
    \end{align*}
    Using Lemma \ref{height} and Lemma \ref{abc}, we have
    \begin{align*}
         h((an+c)(t-b)^{-1}) \leq &\,  h(an+c) +  h(t-b)   \\ 
                \leq & \ h(a) + h(n)+h(c)+\log{2} +  h(t) +h(b) + \log{2} \\ 
                \leq & \ 3 \, \log{(4Y^3)} + \log{n}+ 2\, \log{2} +\log^{+}{t}  \\
                \leq & \ 3\, \log{(4Y^3)}\log{n} \log^+{t} \\
  %      h(t-b) \leq &\ h(t) +h(b) + \log{2} \leq \log^{+}{t} + \log{(4Y^3)} + \log{2} \\
    %            \leq & 2 \, \log^+{t} \, \log{(4Y^3)}  , \\
    \end{align*}
    and
    \begin{align*}
        h(\alpha) = &\ \log{|\alpha|} \leq \log{Y}
    \end{align*}
    as $|\alpha| \leq |r_3| = |\alpha^2\beta| \leq Y$. 
    Hence by Lemma \ref{Yu}, we obtain
    \begin{align*}
        \nu_p(\Lambda) \leq  &\ 19(20\sqrt{3})^6\frac{p}{(\log{p})^2}\log{(2e^5)} \ H_1H_2 \log{n}   \\ 
         \leq & \ 5.61 \cdot 10^{11}\, p \log{Y} \log{(4Y^3)} \log^2{n} \log^+{t}  \\
         \leq & \ 2.01 \cdot 10^{12}\, p \log^2{Y} \log^2{n} \log^+{t}. \\
    \end{align*}
    This, in conjunction with the fact that
    \[
    \nu_p(t-b) \leq \frac{\log{(|b-t|)}}{\log{p}}  \leq \log{(4Y^3)} \log^+{t} \leq 3.6\log{Y} \log^+{t}
    \]
    yields
    \[
    \nu_p(u_n-t) < 2.02  \cdot 10^{12} \, p \log^2{Y} \log^2{n} \log^+{t}. 
    \]
    \textit{Case 2: Assume $\alpha = 1$.} Using \eqref{u_n}, we get $u_n-t = an+c+b\beta^n -t $.\\ Let  $\Lambda = \frac{b\beta^n}{(t-(an+c))}-1$. (Note that $t\neq an+c$ and $\Lambda \neq 0$.) Then
    \[
        \nu_p(u_n-t) =  \nu_p(t-(an+c)) + \nu_p(\Lambda).
    \]
     To bound $\nu_p(\Lambda)$, we apply Lemma \ref{Yu} with the parameters  
    \begin{align*}
        &l=2,\quad \qquad \eta_1 = b(t-(an+c))^{-1}, \qquad \ \eta_2 =\beta,  \\
       &d_1 = 1, \qquad \quad  d_2=n, \qquad \qquad \qquad\qquad B^*=n   .
    \end{align*}
    Using Lemma \ref{height} and Lemma \ref{abc}, we can write
    \begin{align*}
        %h(b) \leq &\  \log{(4Y^3)}    ,\\
        h(b(t-(an+c))^{-1}) \leq &\ h(b) +h(t) +h(an+c) + \log{2} \\
            \leq & \ 3\, \log{(4Y^3)} +  \log^{+}{t} + \log{(n)} + 2\, \log{2} \\
                \leq & \  3 \, \log^+{t}  \log{(4Y^3)} \log{(n)}   \\
    \end{align*}
    and
    \begin{align*}
        h(\beta) = &\ \log{\beta} \leq \log{Y}.
    \end{align*}
    As before, we obtain
    \begin{align*}
        \nu_p(\Lambda) \leq  &\ 19(20\sqrt{3})^6\frac{p}{(\log{p})^2}\log{(2e^5)} \ H_1H_2 \log{n}   \\ 
         \leq & \ 5.61 \cdot 10^{11}\, p\log{(4Y^3)} \log{Y} \log^2{n} \log^+{t}  \\
         \leq & \ 2.01 \cdot 10^{12}\, p \log^2{Y} \log^2{n} \log^+{t}. \\
    \end{align*}
    Since
    \[
    \nu_p(t-(an+c)) \leq \frac{\log{(|an+c-t|)}}{\log{p}}  \leq \log^2{(4Y^3)} \log{n} \log^+{t} \leq 16\,\log^2{Y} \log{n} \log^+{t},
    \]
    we get
    \[
    \nu_p(u_n-t) < 2.02  \cdot 10^{12} \, p \log^2{Y} \log^2{n} \log^+{t}. 
    \]
\end{proof}

In the next lemma, we prove that there are only finitely many (effectively computable) members of the sequence $\{u_n\}$ which can be represented as a (non-degenerate) linear combination of a fixed number of factorials.
\begin{lemma}\label{n<f(n,t)}
   Let $k$ and $A$ be fixed positive integers and  $\{u_n\}_{n\geq 0}$ be a ternary recurrence sequence as in Theorem \ref{th:general}. %In addition, let $k\geq1$ and $A\geq1$ be fixed positive integers.
   Consider the equation
    \begin{equation}\label{u=a_im_i}
         u_n = a_1m_1! + \ldots+ a_km_k! \quad \text{where} \quad |a_i| \leq A, (1\leq i\leq k)
    \end{equation}
   in integers $(n,m_1,\ldots,m_k)$ with $\sum_{i=j_1}^{j_2}a_{i}m_{i}!\notin \{(an+c)\alpha^n,b\beta^n\}$ for $0< j_1\leq j_2\leq k$ and 
    \begin{equation}\label{m_1>m_k>1}
        m_1>m_2>\cdots >m_k\geq 1.
    \end{equation} 
    Then, we have $n\leq \max{(c_1,n_0)}$, where $n_0:=n_0(k)$ is the largest positive integer solution of the inequality 
    \[
    n<\frac{(\log{4A}+10.49 \, c_2 \log^3{n})^k}{\log{\gamma}}.
    \]    
\end{lemma}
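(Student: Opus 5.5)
We argue by induction on $k$, in the style of Grossman--Luca and \cite{BHLP_23}: compare the large $p$-adic valuation of a tail of factorials with the upper bound of Lemma \ref{un<logn}. Assume $n>c_1$. By Lemma \ref{un>alpha}, $|u_n|\ge \gamma^n/(6Y^3)$ (both cases), so \eqref{u=a_im_i} gives $kA\,m_1!\ge|u_n|$. Hence
\[
m_1\log m_1 \gg n\log\gamma ,
\]
and the aim is to bound $m_1$ (equivalently $\log m_1!$) by something polylogarithmic in $n$.

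\emph{Base case $k=1$.} Here $u_n=a_1m_1!$. We take $p=2$ (or the smallest prime not dividing $a_1$). Lemma \ref{v(a!)>a/2p} gives $\nu_2(u_n)\ge \nu_2(m_1!)>m_1/4$ once $m_1\ge2$. On the other hand, Lemma \ref{un<logn} with $t=0$ gives $\nu_2(u_n)<2c_2\log^2 n$. Note $u_n\neq0$ by Lemma \ref{u_n=0}, since $n>c_1>39Y\log Y$. Thus $m_1<8c_2\log^2n$, so
\[
n\log\gamma\le \log(6Y^3A)+m_1\log m_1 \le \log 4A+10.49\,c_2\log^3 n
\]
after absorbing constants.

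\emph{Inductive step.} Suppose we have bounded $m_1,\dots,m_{j}$ in terms of $n$. Write
\[
u_n-t_j=a_{j+1}m_{j+1}!+\cdots+a_km_k!,\qquad t_j=\sum_{i\le j}a_im_i!,
\]
so that $\log^+t_j\le \log(jA)+m_1\log m_1$. The non-degeneracy hypothesis is exactly what is needed here. It guarantees that $t_j\neq u_n$, and that $t_j\ne b$ when $\beta=1$ and $t_j\neq an+c$ when $\alpha=1$ (these are the sums $\sum_{i=1}^{j}a_im_i!$, and $b=b\beta^n$ etc.). So Lemma \ref{un<logn} applies with $p=2$ and gives
\[
m_{j+1}/4<\nu_2(u_n-t_j)<2c_2\log^2 n\,\log^+ t_j,
\]
using $\nu_2$ of the right-hand side $\ge \nu_2(m_{j+1}!)$. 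This estimate needs a little care when the lower factorials might cancel 2-adically. The fix is to take $m_{j+1}$ large compared to $m_k$, or otherwise absorb the $m_k$ into the bounded case. Iterating gives
\[
\log m_1!\lesssim (\log 4A+10.49c_2\log^3n)^{k}.
\]
Alternatively, one can run the induction from the top: bound the gap $m_{j}-m_{j+1}$ by dividing out $m_{j+1}!$.

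Finally, combine $n\log\gamma\le \log(6kAY^3)+\log m_1!$ with this bound to get $n<(\log4A+10.49c_2\log^3n)^k/\log\gamma$, i.e.\ $n\le n_0$ whenever $n>c_1$.

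\emph{Main obstacle.} The hard part is the inductive bookkeeping. The $\log^+t$ factor in Lemma \ref{un<logn} multiplies the previous bound at each step; this is what produces the $k$-th power, and the constants must be tracked so that exactly $10.49\,c_2\log^3n$ emerges. One must also handle the 2-adic valuation of the remaining tail $a_{j+1}m_{j+1}!+\cdots$, which is only bounded below by $\nu_2(m_k!)-\nu_2$-losses. I would handle this by ordering the argument so that at each step the bounded quantity is the smallest remaining index's contribution, peeling factorials off from the bottom as in \cite{BHLP_23}.
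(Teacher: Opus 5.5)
\textbf{There is a genuine gap.} Your inductive step does not work as written. You peel off the \emph{largest} factorials first and assert
\[
\nu_2(a_{j+1}m_{j+1}!+\cdots+a_km_k!)\ \ge\ \nu_2(m_{j+1}!).
\]
But $m_k<m_{k-1}<\cdots<m_{j+1}$, so the only factorial dividing every term of that tail is $m_k!$. The valid lower bound is therefore $\nu_2(m_k!)$, and comparing it with Lemma \ref{un<logn} controls $m_k$, not $m_{j+1}$.

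For the same reason the top-down scheme has no starting point. With $t=0$ you only get $\nu_2(u_n)\ge\nu_2(m_k!)$, so nothing bounds $m_1$ first. ``Induction on $k$'' does not help, because the $(k-1)$-term statement says nothing about a $k$-term equation.

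The obstacle is also not 2-adic cancellation or ``losses'' in the tail. Divisibility by the smallest factorial gives the lower bound exactly, and cancellation can only raise the valuation. The real issue is which index the comparison controls. Your suggested patches (taking $m_{j+1}$ large relative to $m_k$, or bounding gaps $m_j-m_{j+1}$ by dividing out $m_{j+1}!$) are not developed and do not address this.

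The ordering you mention only in your last sentence is the right one, and it is the paper's proof.
\begin{itemize}
\item Put $N_j=\sum_{i=k+1-j}^{k}a_im_i!$.
\item For $j=1$: Lemma \ref{un<logn} with $t=0$, $p=2$, together with $\nu_2(u_n)\ge\nu_2(m_k!)>m_k/4$, gives $m_k<8c_2\log^2 n$. Hence $\log|4N_1|<\log 4A+10.48\,c_2\log^3 n$.
\item Given the bound for $N_j$, write $u_n-N_j=a_1m_1!+\cdots+a_{k-j}m_{k-j}!$. Now $m_{k-j}!$ divides every term, so $\nu_2(u_n-N_j)\ge\nu_2(m_{k-j}!)$. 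Lemma \ref{un<logn} with $t=N_j$ then gives $m_{k-j}<8c_2\log|4N_j|\log^2 n$.
\item Then $|4N_{j+1}|\le|4N_j|+|4a_{k-j}m_{k-j}!|$ yields $\log|4N_{j+1}|<(\log 4A+10.48\,c_2\log^3 n)^{j+1}$. The factor $\log|4N_j|$ entering at each step is what produces the $k$-th power.
\item Finally $u_n=N_k$, and Lemma \ref{un>alpha} finishes.
\end{itemize}

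One further correction: non-degeneracy does not give $u_n\neq N_j$ (nor your $u_n\ne t_j$). That condition is the non-vanishing of the complementary block. You need, as the paper does, a preliminary reduction to the case with no vanishing subsums.
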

\begin{proof}
    The claim follows immediately if $n \leq c_1$. Thus, we assume that $n>c_1$. We may assume that there is no vanishing subsum on the right hand side of eq. \eqref{n<f(n,t)}, i.e. we have 
    \begin{equation}\label{sum0}
        \sum_{i\in I \subset \{1,2,\ldots,k\}}a_im_i!\neq 0
    \end{equation}
    for each non empty $I \subset \{1,2,\ldots,k\}$ because if not, we obtain an equation similar to eq. \eqref{sum0} with fewer terms.
    For $j=1,2,\ldots, k$ put 
    \[
    N_j = \sum_{i=1}^ja_{k+1-i}m_{k+1-i}!.
    \]
    We show by induction that 
    \begin{equation}\label{induction}
    \log{|4N_j|} < (\log{4A}+ 10.48  \, c_2\log^3{n})^j.
    \end{equation}
    For $j=1$, we have $|4N_j| = |4a_km_k!|$. Additionally, since $n>c_1$, using Lemma \ref{un<logn} with $t = 0$ and $p = 2$, we obtain 
    \begin{equation}\label{v2(un)<clogn}
      \nu_2(u_n)<  2 \,c_2\log^2{n}.  
    \end{equation}
    Furthermore, it is evident from eq. \eqref{m_1>m_k>1} that
    \[
    \nu_2(u_n) = \nu_2(a_1m_1!+a_2m_2!+\cdots + a_km_k!) \geq \nu_2(m_k!).
    \]
    If $m_k\geq4 \,(2 \,  c_2 \log^2{n})$ then by Lemma \ref{v(a!)>a/2p}, $\nu_2(m_k!) > 2 \, c_2 \log^2{n}$, contradicting eq.\eqref{v2(un)<clogn}. Therefore, $m_k<8\, c_2 \log^2{n}$ and 
    \begin{align}\label{log(4N1)< first}
    \begin{split}
    \log{|4N_1|} & = \log{|4a_km_k!|} \leq \log{|4a_k|}+m_k \log{m_k} \\
         &\leq  \log{4A} + 8 \, c_2 \log^2{n} \log{(8 \, c_2 \log^2{n})}.
  \end{split}
    \end{align}
    Since $n \leq 8 \, c_2<\frac{(\log{4A}+10.48 \, c_2\log^3{n})}{\log{\gamma}}$, we assume that $n>8 \, c_2$. Therefore, using eq. \eqref{log(4N1)< first}, we have 
    \[
    \log{|4N_1|} \leq \log{4A} + 8 \, c_2 \log^2{n} \log{(n \log^2{n})} = \log{4A} + 8 \, c_2 \log^2{n} \, ( \log{n} + \log{( \log^2{n})}).
    \]
    Since $n > c_1(\geq11^8)$, it follows that $\log{\log^2{n}} < 0.31\log{n}$. Thus, by eq. \eqref{log(4N1)< first}, we obtain
    \[
    \log{|4N_1|} < \log{4A} + 10.48 \, c_2 \log^3{n}.
    \]
    Now, assume that eq. \eqref{sum0} holds for some $1\leq j< k$. We reformulate eq. \eqref{u=a_im_i} as $u_n - N_j = a_{k-j}m_{k-j}!+ \cdots + a_1m_1!$. Using eq. \eqref{sum0} we conclude that $u_n\neq N_j$ and $N_j\neq 0$. Therefore, Lemma \ref{un<logn} can be applied with $p = 2$ and $t = N_j$. We obtain
    \begin{equation}\label{v2(un-Nj)}
        \nu_2(u_n-N_j)<2 \, c_2\log{|4N_j|} \log^2{n}.
    \end{equation}
    Additionally, by virtue of eq. \eqref{m_1>m_k>1}, we have
    \[
    \nu_2(u_n-N_j)= \nu_2(a_1m_1!+ \cdots + a_{k-j}m_{k-j}!) \geq \nu_2(m_{k-j}!)
    \]
    If $m_{k-j}\geq 8\, c_2 \log{|4N_j|} \log^2{n}$, then by Lemma \ref{v(a!)>a/2p} we conclude that 
    $\nu_2(m_{k-j}!) > 2 \, c_2 \log{|4N_j|\log^2{n}}$ which contradicts eq. \eqref{v2(un-Nj)}. Hence, $m_{k-j} < 8 \, c_2 \log{|4n_j|} \log^2{n}$. Therefore, we have 
    \begin{equation}\label{log(4akmk)< logn}
        \log{|4a_{k-j}m_{k-j}!|} < \log{4A} + 8 \, c_2 \log{|4N_j|} \log^2{n} \log{(8 \, c_2 \log{|4N_j|} \log^2{n})}.
    \end{equation}
    We may assume that $n> 8 \, c_2 \log{|4N_j|}$. In fact, by eq. \eqref{induction} we obtain that $n<8 \, c_2 (\log{4A}+10.48 \, c_2 \log^3{n})^j$ if $n\leq 8\,  c_2 \log{|4N_j|}$. As $j\leq k-1$, we get  
    \[
    n< 8 \, c_2 (\log{4A} + 10.48\, c_2 \log^3{n})^{k-1}
    \]
    which implies that $n\leq n_0$. As before, we may write by $n> 8 \, c_2 \log{|4N_j|}$ and eq. \eqref{log(4akmk)< logn} that 
    \begin{align*}
        \log{|4a_{k-j}m_{k-j}|} & < \log{4A} +8 \, c_2\log^2{n} \log(n\log^2{n}) \log{|4N_j|} \\ 
                                & < \log{4A} +10.48 \, c_2\log^3{n} \log{|4N_j|}.
    \end{align*}
    It is clear that 
    \[
    |4N_{j+1}| \leq |4N_j| + |4a_{k-j}m_{k-j}!|.
    \]
    Thus, 
    \begin{align*}
        |4N_{j+1}| & < |4N_j| + \exp\{\log{4A} +10.48\,  c_2\log^3{n} \log{|4N_j|}\} \\
                    & = |4N_j| +4A |4N_j|^{10.48 \, c_2\log^3{n}}.
    \end{align*}
    This leads to the inequality
    \begin{equation}\label{log(4Nj+1)< first}
    \log{|4N_{j+1}|} < \log{4A} + 10.48 \log^3{n} \log{|4N_j|} + \log{\bigg(1+\frac{1}{4A|4N_j|^{10.48 \, c_2\log^3{n}-1}}\bigg)}.
    \end{equation}
    Also, $\log{\bigg(1+\frac{1}{4A|4N_j|^{10.48 \, c_2\log^3{n}-1}}\bigg)}<0.1$ because $10.48\, c_2\log^3{n}-1\geq 1$, $A\geq1$ and $|N_j|\geq1$. Thus, from eq. \eqref{log(4Nj+1)< first} we have
    \begin{equation} \label{log(4Nj)< second}
        \log{|4N_{j+1}|} < \log{4A} + 10.48 \, c_2 \log^3{n}\log{|4N_j|} +0.1.
    \end{equation}
    The combination of eqs. \eqref{induction} and \eqref{log(4Nj)< second} gives 
    \begin{align*}  %\label{log(4Nj+1)< second}
        \log{|4N_{j+1}|} & <  \log{4A} + 10.48 \, c_2 \log^3{n} (\log{4A}+10.48 \, c_2 \log^3{n} )^j +0.1 \\
           & < (\log{4A}+10.48 \, c_2 \log^3{n} )^{j+1}
    \end{align*}
    %Since, 
   % \[
    %\log{4A}  + 10.84 \, c_2 \log^3{n} (\log{4A}+10.48\, c_2 \log^3{n} )^j < (\log{4A}+10.48 \, c_2 \log^3{n} )^{j+1}-1.
    %\]
   % Using eq. \eqref{log(4Nj+1)< second}, we may conclude that 
   % \[
    %\log{|4N_{j+1}|} < (\log{4A}+10.48 \, c_2 \log^3{n} )^{j+1}
    %\]
    which completes the induction. Note that $u_n= N_{k}$\\
    Using Lemma \ref{un>alpha}, we have 
   % \[
    %|u_n|   > \frac{n|\alpha|^n}{6Y^3}.
    %\]
    %Therefore,
    \[
    \frac{\gamma^n}{ Y^3}  < |2\, u_n|,
    \]
    yielding
    \[
    n  < \frac{\log{|4u_n| + 3\, \log{Y}}}{\log{\gamma}}   = \frac{\log{|4N_k|} + 3\, \log{Y} }{\log{\gamma}}.
    \]
    %\begin{align*}
     %   |u_n|  & > \frac{n|\alpha|^n}{6Y^3}, \\
      %  |\alpha|^n & < |u_n| < |4u_n|, \\
       % n & < \frac{\log{|4u_n|}}{\log{|\alpha|}} = \frac{\log{|4N_k|}}{\log{|\alpha|}}.
    %\end{align*}
    Hence
    \[
    n < \frac{1}{\log{\gamma}}(\log{4A}+ 10.49\, c_2 \log^3{n})^k.
    \]
    Thus $n \leq n_0$.
\end{proof}
%==================================================================================================================
\section{Proof of Theorem \ref{th:general}}
\begin{proof}
    Assume that $n> \ c_4=\max{\{c_1,n_1\}}$. Since $n_1\geq n_0$, Lemma \ref{n<f(n,t)} implies that 
    \[
        u_n-(a_1m_1!+ \cdots + a_km_k!) \neq 0.
    \]
    Recall that $|a_i|\leq A$ and 
    \begin{equation}\label{m1>mk>1,th1}
    m_1>m_2> \cdots > m_k \geq 1.
    \end{equation}
    Thus, we may write 
    \begin{equation}\label{un-aimi!=0}
    u_n= a_1m_1!+ \cdots + a_km_k!+s,
    \end{equation}
    where $s\neq 0$ is some integer. 
    %Employing an inductive argument similar to that applied in Lemma \ref{n<f(n,t)}, we obtain an explicit upper bound for $n$ in terms of $P,A,k$ and $Y$. in eq. \eqref{un-aimi!=0}. This leads to an explicit lower bound for $P$ and therefore for $P(u_n-(a_1m_1!+a_2m_2!+ \cdots + a_km_k!))$.   
    Assume that eq. \eqref{un-aimi!=0} does not have a vanishing subsum on the right hand side i.e., 
    \[
    \sum_{i\in I \subset \{1,2,\ldots,k\}}a_im_i!+ \delta s \neq 0,
    \]
    for each non empty subset $I \subset \{1,2,\ldots,k\}$ and each $\delta \in \{0,1\}$. Indeed, if there is an index set $I \subset \{1,2,\ldots,k\}$ and $\delta \in \{0,1\}$ such that 
    \[
    \sum_{i\in I \subset \{1,2,\ldots,k\}}a_im_i!+ \delta s = 0,
    \]
    then from eq. \eqref{un-aimi!=0}, we obtain that
    \[ 
    u_n = 
    \begin{cases*}
    {}  \sum\limits_{i\in I \subset \{1,2,\ldots,k \} \setminus I} a_im_i!+s,     & \mbox{if $\delta =0$}, \\
    {}  \sum\limits_{i\in I \subset \{1,2,\ldots,k\}\setminus I}a_im_i!,  & \mbox{if $ \delta \neq 0.$}
    \end{cases*}
    \]
    For $\delta=1$, we have an equation similar to eq. \eqref{u=a_im_i} which for $n>\max(c_1,n_0)$ is not possible and for  $\delta=0$ we have an equation similar to eq. \eqref{un-aimi!=0} with fewer terms.

    Suppose $|s|=m_i!$ for some $i= 1,2,\ldots,k$, then we have 
    \begin{equation*}
        u_n = a_1m_1! + \ldots+ (a_i+1)m_i!+ \cdots +  a_km_k!.
    \end{equation*}
    Using Lemma \ref{n<f(n,t)} and the preceding eq. we conclude that $n \leq c_4$, which is a contradiction to $n>c_4$. 

    Let $m_{k+1}=0$ and $m_0$ be such that $m_0!>\max\{|s|,m_1!\}$. There exists an integer $i_0$ with $0\leq i_0\leq k$ such that 
    \begin{equation*}
        m_{k+1-i_0}! < |s|< m_{k-i_0}!.
    \end{equation*}
    Also, for $i= 1,2,\ldots, k+1$, let
    \begin{equation*}%\label{define ti}
        t_i = 
        \begin{cases}
            a_{k+1-i}m_{k+1-i}!, & \text{if } i < i_0+1, \\
            s, & \text{if } i=i_0+1, \\
            a_{(k+1)-(i-1)}m_{(k+1)-(i-1)}!, & \text{if } i > i_0+1. 
        \end{cases}
    \end{equation*}
    For $j=1,2, \ldots,k+1$, put
    \begin{equation*}
        N_j = \sum_{i=1}^jt_i.
    \end{equation*}
    Using induction, we now show that
    \begin{equation}\label{PMIassumptionTh1}
        \log{|4N_j|}<(\log{4A} + 2.62 \, c_2 P^2\log^3{n})^j,
    \end{equation}
    where $P= \max\{p:p|s\}$. \\
    For $j=1$, it is easy to observe that
    \begin{equation*}
        N_1 = t_1 = 
        \begin{cases}
            s & \text{if } |s|<m_k!, \\
            a_{k}m_{k}! & \text{if } |s|>m_k!. 
        \end{cases}
    \end{equation*}
    \textit{Case 1: Assume $|s| < m_k!$.}

    %We let $P= \max{\{p : p|s\}}$.
    Since $n>c_4>39\, Y \log{Y}$, using Lemma \ref{u_n=0}, we have $u_n \neq 0$. Using Lemma \ref{un<logn} with $t=0$ and every prime factor $p|s$, we conclude that
    \begin{equation}\label{v(un)<c2P,th1}
        \nu_p(u_n) < c_2\, P \log^2{n}.
    \end{equation}
    Furthermore, it is evident from eq. \eqref{m1>mk>1,th1} that
    \begin{equation}\label{v2(un)>v2(mk),th1}
     \nu_p(a_1m_1!+a_2m_2!+\cdots + a_km_k!) \geq \nu_p(m_k!).
    \end{equation}
    If $m_k \geq 2\, c_2 {P}^2 \log^2{n} $, then by Lemma \ref{v(a!)>a/2p} with $p \leq {P}$ we obtain 
    \[
    \nu_p(m_k!)> \frac{m_k}{2p}\geq c_2\, {P} \log^2{n}.
    \]
    Hence, by eqs. \eqref{un-aimi!=0}, \eqref{v(un)<c2P,th1} and \eqref{v2(un)>v2(mk),th1} yields
    \begin{equation}\label{vp(un)=vp(s),th1}
        \nu_p(u_n)=\nu_p(s) \qquad \text{for} \ p|s.
    \end{equation}
    Therefore, we conclude from eqs. \eqref{v(un)<c2P,th1} and  \eqref{vp(un)=vp(s),th1} that 
    \[
    \log{|s|}= \sum_{p|s}\nu_p(s)\log{p} = \sum_{p|s}\nu_p(u_n)\log{p} < c_2\, {P} \log^2{n} \log{P} \, \pi{(P)}.
    \]
    As $\pi{(P)} < \frac{2P}{\log{P}} $ (see Corollary $1$ in \cite{RosserSchoenfeld1962}), the preceding inequality leads to 
    \begin{equation*}%\label{case1eq1}
        \log{(4|N_1|)} = \log{(4|s|)} < \log{4} +2\, c_2 {P}^2 \log^2{n}. 
    \end{equation*}
    Here eq. \eqref{PMIassumptionTh1} holds for $j\geq1$.
    If $m_k< 2 \, c_2 {P}^2 \log^2{n}$, then 
    \begin{equation*}
        \log{|4m_k!|} < \log{4} + (2\, c_2 {P}^2\log^2{n})\log{(2\, c_2 {P}^2\log^2{n})},
    \end{equation*}
    because $m_k!\leq m_k^{m_k}$. We may assume that $n\geq 2c_2{P}^2$, since otherwise we obtain $P>n^{\frac{1}{2}}(2\, c_2)^{-\frac{1}{2}}$, which is better than the stated inequality. Therefore, we get 
    \begin{equation*}
    \log{|4m_k!|} < \log{4} + (2\, c_2 {P}^2\log^2{n})\log{(n\log^2{n})}.
    \end{equation*}
    Since $n > c_1(\geq11^8)$, it follows that $\log{\log^2{n}} < 0.31\log{n}$. Thus, by previous eq., we can state that
    \[
    \log{|4m_k!|} < \log{4} + 2.62 \, c_2 {P}^2 \log^3{n}.
    \]
    Using $\log{|4N_1|}= \log{|4s|}$, we conclude that 
    \begin{equation*}%\label{case1eq2}
        \log{|4N_1|} < \log{4} + 2.62 \, c_2 {P}^2 \log^3{n}.
    \end{equation*}
    Thus eq. \eqref{PMIassumptionTh1} holds for $j=1$ in this case.\\
    \textit{Case 2: Assume $|s| > m_k!$.} Then we have $N_1=a_km_k!$. Using the same justification as in Case $1$, we establish that $\log{(4|s|)}<\log{4} + 2 \, c_2{P}^2\log^2{n}$, for $m_k \geq 2\, c_2 \, {P}^2 \log^2{n}$. This together with $m_k!<|s|$ and $|a_k|\leq A$ implies that 
    \begin{equation*}%\label{case2eq1}
        \log{|4N_1|} = \log{|4a_km_k!|} < \log{|4a_ks|} < \log{4A} + 2 \, c_2 {P}^2 \log^2{n}.
    \end{equation*}
    Suppose $m_k< 2\, c_2{P}^2\log^2{n}$. Using the same justification as in Case $1$, we conclude 
    \begin{equation*}%\label{case2eq2}
        \log{|4N_1|} = \log{|4a_km_k!|}  < \log{4A} + 2.62 \, c_2 {P}^2 \log^3{n}.
    \end{equation*}
    Hence, eq. \eqref{PMIassumptionTh1} holds for $j=1$ in this case as well, 
    proving the induction hypothesis for $j=1$.\\
    
    Assume that eq. \eqref{PMIassumptionTh1} holds for $j=J$ where  $1\leq J<k+1$. 
    Now, we prove that eq. \eqref{PMIassumptionTh1} is true for $j=J+1$. We can rewrite eq. \eqref{un-aimi!=0} as 
    \begin{equation}\label{un-nj=aimi!+deltas,Th1}
         u_n-N_J = a_1m_1!+ \cdots + a_lm_l! +\delta s 
    \end{equation}
    where $\delta \in \{0,1\}$ and $ l= l(\delta,J,k) := k+1-J-\delta$. Clearly, we can see that $N_{J+1} = N_J +t_{J+1}$, where
    \begin{equation}\label{tj=alml!,Th1}
        t_{J+1} = 
        \begin{cases}
           
            a_{l}m_{l}!, & \text{if } (\delta =0) \text{ or  } (\delta =1 \text{ and } |s|>m_l!),\\
             s, & \text{if } \delta =1 \text{ and } |s|<m_l!.
        \end{cases}
    \end{equation}
    Also, $N_J \neq 0$ and $u_n - N_J\neq 0 $. For every prime $p$, we obtain $\nu_p(u_n-N_J)< c_2\, p\log^+{N_J}\log^2{n} <c_2\, p\log{|4N_J|}\log^2{n}  $ by applying Lemma \ref{un<logn} with $t=N_J$ (Note that the conditions of Lemma \ref{un<logn} are satisfied as the solutions are non-degenerate.) Further, if $p\, |\, s$ then $p\leq {P}$. Hence,
    \begin{equation}\label{vp(un-Nj),TH1}
        \nu_p(u_n-N_J)< c_2\, {P} \log{|4N_J|}\log^2{n}.
    \end{equation}
    Using eq. \eqref{m1>mk>1,th1}, we conclude that
    \begin{equation}\label{>vp(ml!)}
        \nu_p(a_1m_1!+\cdots+ a_lm_l!)\geq \nu_p(m_l!).
    \end{equation}
    In order to determine $\log{|4t_{J+1}|}$, we divide the proof into three cases based on the value of $t_{J+1}$.

    \textit{Case I: Assume $\delta=1$ and $|s|<m_l!$.}\\
    By eq. \eqref{tj=alml!,Th1}, we get $t_{J+1}= s$. Suppose $m_l\geq  2 \, c_2 {P}^2\log{|4N_J|\log^2{n}}$. Let $p$ be a prime dividing $s$. Then Lemma \ref{v(a!)>a/2p} implies that
    \begin{equation*}
        \nu_p(m_l!)> \frac{m_l}{2p} \geq c_2\, {P} \log{|4N_J|}\log^2{n}.
    \end{equation*}
    Using eqs. \eqref{un-nj=aimi!+deltas,Th1}, \eqref{vp(un-Nj),TH1} and \eqref{>vp(ml!)}, we get that
    \begin{equation*}
        \nu_p(u_n-N_J)= \nu_p(s).
    \end{equation*}
    Therefore, %using eq. \eqref{vp(un-Nj),TH1} and $p\leq P$ we get that
    \begin{align*}
        \log{|s|}  & = \sum_{p|s}\nu_p(s)\, \log{p} = \sum_{p|s}\nu_p(u_n-N_J)\log{p} \\
                   & < c_2\, P\log{|4N_J|} \log^2{n}\, \pi({P})\, \log{P}.
    \end{align*}
    Since $\pi({P})<\frac{2P}{\log{P}}$, we conclude that
    \begin{equation}\label{case3eq1}
        \log{|4t_{J+1}|} = \log{|4s|} < \log{4} + 2\,c_2\, {P}^2\log{|4N_J|} \log^2{n}\,.
    \end{equation}
    Now, suppose $m_l<2\, c_2\, {P}^2\log|4N_J|\log^2{n}$. Using the same justification as in the case of $j=1$, we get that   
    \begin{equation*}
        \log{(4m_l!)}< \log{4}+ 2\, c_2 {P}^2 \log{|4N_J|}\log^2{n}\, \log{(2\, c_2 {P}^2  \log{|4N_J|}\log^2{n})}.
    \end{equation*}    
    If $n<2\, c_2 {P}^2 \log{|4N_J|}$ then by using eq. \eqref{PMIassumptionTh1} and $J\leq k$, we obtain
    \begin{equation}\label{case3eq2}
        n<2\, c_2 {P}^2 (\log{4A} + 2.62\, c_2 {P}^2 \log^3{n})^J< (\log{4A} + 2.62\, c_2 {P}^2 \log^3{n})^{J+1},
    \end{equation}
    which implies the theorem. Now assume  $n \geq 2\, c_2 {P}^2 \log{|4N_J|} $. So, 
    \begin{align*}
        \log{(4m_l!)} & < \log{4}+ 2\, c_2 {P}^2 \log{|4N_J|}\log^2{n}\, \log{(n\log^2{n})} \\
                      & <  \log{4}+ 2\, c_2 {P}^2 \log{|4N_J|}\log^2{n}\,(\log{n} + \log{(\log^2{n})}). 
    \end{align*}
    Since $n\geq 2\, c_2 {P}^2 \log{|4N_J|}$, it follows that $\log(\log^2{n})<0.31\, \log{n}$. Thus, by the previous equation, we  can state that
    \begin{equation*}
        \log{|4t_{J+1}|} = \log{|4s|} < \log{4m_l!} < \log{4} + 2.62\, c_2 \, {P}^2 \log{|4N_J|}\,  \log^3{n}.
    \end{equation*}
    \textit{Case II: Assume $\delta=1$ and $|s|> m_l!$.}\\
    By an argument similar to Case I, we obtain that
    \begin{equation}\label{case3eq3}
        \log{|4t_{J+1}|} = \log{|4s|}  < \log{4} + 2\, c_2 \, {P}^2 \log{|4N_J|}\,  \log^3{n}.
    \end{equation}
    \textit{Case III: Assume $\delta=0$.}\\
    By eq. \eqref{tj=alml!,Th1}, we have $t_{J+1}= a_lm_l!$. Let $p$ be a prime dividing $s$. We now use Lemma \ref{un<logn} with $t=N_J$. (Note that the conditions of Lemma \ref{un<logn} are satisfied as the solutions are non-degenerate.) We thus obtain
    \begin{equation}\label{case3eq4}
        \nu_p(u_n-N_J)< c_2\, {P} \log{|4N_J|}\, \log^2{n}.
    \end{equation}
    Using eqs. \eqref{m1>mk>1,th1} and \eqref{un-nj=aimi!+deltas,Th1}, we conclude that
    \begin{equation*}
        \nu_p(u_n-N_J) \geq \nu_p(m_l!).
    \end{equation*}
    If $m_l \geq  2\, c_2 \, {P}^2 \log{|4N_J|}\log^2{n}$, then by using Lemma \ref{v(a!)>a/2p} and $p\leq {P}$ we get 
    \begin{equation*}
        \nu_p(m_l!)> c_2 \, {P}^2 \log{|4N_J|} \log^2{n},
    \end{equation*}
    which contradicts the previous equation.\\ We may assume  $m_l <  2\, c_2 \, {P}^2 \log{|4N_J|}\log^2{n}$. Using the same justification as  in the preceding case, we may conclude that  
    \begin{equation*}
        \log{|4t_{J+1}|} =  \log{4a_lm_l!} < \log{4A} + 2.62\, c_2 \, {P}^2 \log{|4N_J|}\,  \log^3{n}.
    \end{equation*}
    Finally, eqs. \eqref{case3eq1}, \eqref{case3eq2}, \eqref{case3eq3} and \eqref{case3eq4} allow us to conclude that
    \begin{equation*}
        \log{|4t_{J+1}|}  < \log{4A} + 2.62\, c_2 \, {P}^2 \log{|4N_J|}\,  \log^3{n}.
    \end{equation*}
    Since $N_{J+1}=N_J + t_{J+1}$, we obtain
    \begin{equation*}
        |4N_{J+1}|< |4N_J| + \exp{\{\log{4A} + 2.62\, c_2 \, {P}^2 \log{|4N_J|}\,  \log^3{n}\}},
    \end{equation*}
    implying that
    \begin{equation*}
        |4N_{J+1}|< |4N_J| + 4A\, |4N_J|^{2.62\, c_2 {P}^2\log^3{n}}.
    \end{equation*}
    The previous inequality leads to 
    \begin{align*}
        \log{|4N_{J+1}|}&< \log{4A} + 2.62 \, c_2 {P}^2 \log{|4N_J|} \log^3{n} \\
                        &+ \log{\bigg(1+\frac{1}{4A\, {|4N_J|}^{2.62 \,  {P}^2 \log^3{n} - 1}}\bigg)}.
    \end{align*}
    Since $A\geq1,\ |N_J|\geq1, \ {P}\geq 2, \ n> n_1 $ and $c_2 \geq 1.33 \cdot 10^{17} \log^2{11}$, we get
    \begin{equation*}
         \log{|4N_{J+1}|} < \log{4A} + 2.62 \, c_2 {P}^2 \log^3{n} (\log{4A} + 2.62 \, c_2 {P}^2 \log^3{n})^J  +0.1.
    \end{equation*}
    The combination of eq. \eqref{PMIassumptionTh1} and the previous equation gives 
    \begin{align*}
        \log{|4N_{J+1}|} < (\log{4A} + 2.62 \, c_2 {P}^2 \log^3{n})^{J+1},
    \end{align*}
    finishing the induction. Note that $u_n = N_{k+1} $.
    Using Lemma \ref{un>alpha}, we get
    \[
    \frac{\gamma^n}{Y^3}  < |2u_n|,
    \]
    yielding
    \begin{align*}
    n  & < \frac{\log{|4u_n|} + 3 \, \log{Y} }{\log{\gamma}} = \frac{\log{|4N_{k+1}|}+ 3 \, \log{Y} }{\log{\gamma}} \\
        & < \frac{1}{\log{\gamma}}(\log{4A} + 2.63 \, c_2 {P}^2 \log^3{n})^{k+1}  \leq \frac{1}{\log{2}}\, {P}^{2k+2} \bigg{(}\frac{\log{4A}}{4} + 2.63 \, c_2 \log^3{n} \bigg{)}^{k+1}.
    \end{align*}
    Hence,
    \[
    {P} > \bigg( \frac{n}{1.45} \bigg)^\frac{1}{2k+2} \bigg( \frac{\log{4A}}{4} + 2.63  \ c_2 \log^3{n} \bigg)^{-\frac{1}{2}}.
    \]
\end{proof}
%==================================================================================================================
\section{Proof of Theorem \ref{th:corollary}}
\begin{proof}
    Assume $n>c_4$. We can rewrite eq. \eqref{un-aimi!=0} as 
    \begin{equation*}
        u_n-\sum_{i=1}^ka_im_i! = b_1s.
    \end{equation*}
    Using Theorem \ref{th:general} and the previous equation, we conclude that
    \begin{equation*}
       c_3(n) < P\bigg( u_n-\sum_{i=1}^ka_im_i \bigg) = P(b_1s) \leq \max{\{{P},A\}},
    \end{equation*}
    leading to
    \begin{equation}\label{n<<,Th2}
    n < 1.45\, (\max{\{{P},A\}})^{2k+2} \bigg( \frac{\log{4A}}{4} + 2.63 \, c_2 \log^3{n}  \bigg)^{k+1}.
    \end{equation}
    Since $A\geq 1$ and  $n \geq c_4$, we get 
    \[
    \frac{1}{4\times 2.63\, \log^3n} + \frac{1}{\log(4A)} < 0.73,
    \]
    which together with eq. \eqref{n<<,Th2} gives
    \[
    n < c_7 \log^{3k+3}{n}.
    \]
    Applying Lemma \ref{4.3} with 
    \[
    x=n, u=0, v= c_7, h = 3k+3,
    \]
    we get 
    \begin{align*}
        n  & < \max{\{ 2^{3k+3}c_7\log^{3k+3}(c_7(3k+3)^{3k+3}),2^{3k+3}(2e^2)^{3k+3}\}} \\
           & = 2^{3k+3}c_7\log^{3k+3}(c_7(3k+3)^{3k+3})=c_6.
    \end{align*}
     So, $n<\max{\{c_4,c_6\}}$.

\end{proof}
%==================================================================================================================
\section{Proof of Theorem \ref{th:specfic_p}}
\begin{proof}
    Applying Theorem \ref{th:corollary} with 
    \[
    k=2, \qquad \mathcal{P}= \{2,3,5,7\},\qquad A = 1,
    \]
    we conclude that for every solution of eq. \eqref{m1m2s}, we must have $n\leq10^{66}$. We divide the proof into two cases:\\
    
\noindent    \textit{Case I: Assume $m_1!\geq \sqrt{C_n}$} i.e.,
\begin{equation}\label{cn<m1}
        C_n\leq (m_1!)^2.
\end{equation}
We further divide Case I into subcases: \\

\noindent\textit{Case I(i): Assume $m_1\leq 10^4$.}\\
    By eq. \eqref{cn<m1}, we obtain that $n<236899$.
    For all such $n$, we have
        \begin{align*}
            &  \qquad  \nu_3(n2^n+1) \leq 12, \qquad \nu_5(n2^n+1) \leq 7, \qquad \nu_7(n2^n+1) \leq 6. 
        \end{align*}
\textit{Case I(i)(a): Assume $m_2 \geq 49$.} Eq. \eqref{m1m2s} implies $n>200$. Then by Lemma \ref{padicid}, we have 
    \[
    \nu_2(m_1!+m_2!)\geq \nu_2(m_2!) \geq \nu_2(49!)\geq 47 > 0=\nu_2(C_n).
    \]
    Similarly,
    \begin{align*}
        %& \nu_2(m_1!+m_2!)\geq \nu_2(m_2!) \geq 47 > \nu_2(C_n), \\
        & \nu_3(m_1!+m_2!)\geq \nu_3(m_2!) \geq 22 > \nu_3(C_n), \\
        & \nu_5(m_1!+m_2!)\geq \nu_5(m_2!) \geq 12 > \nu_5(C_n), \\
        & \nu_7(m_1!+m_2!)\geq \nu_7(m_2!) \geq 8 > \nu_7(C_n).
    \end{align*}
By eq. \eqref{m1m2s}, we obtain 
    \[
    \nu_p(s) = \nu_p(C_n)\ \textrm{ for }\ p=2,3,5,7.
    \]
But $s\in \mathcal{S}_{\mathcal{P}}$. Hence 
    \begin{equation}\label{E:s_fact}
        s= 2^{\nu_2(C_n)}3^{\nu_3(C_n)}5^{\nu_5(C_n)}7^{\nu_7(C_n)}.
    \end{equation}
Since $m_1\geq m_2\geq 49$, we have
    \[
    \nu_{11}(m_1!+m_2!)\geq4\ \textrm{ and }\ \nu_{13}(m_1!+m_2!)\geq3.
    \]
But $m_1!+m_2!=C_n-s$ by virtue of eq. \eqref{m1m2s}. We compute $\nu_{11}(C_n-s)$ for $200<n<236899$ and $s$ as in \eqref{E:s_fact}. It turns out that 
$\nu_{11}(C_n-s)\geq4$ only at $n \in \{36483,73205,131769,146410,159395,161051,  186397,203265,219615,222723, \\ 234256\}$.
Among these values, the inequality $\nu_{13}(C_n-s) \geq 3 $ never holds. Thus, we conclude that eq. \eqref{m1m2s} has no solution in this case.\\

\noindent \textit{Case I(i)(b): Assume $1 \leq m_2 \leq 48$ and $m_1 \geq 56$.} 
%For $m_2=1$, we have 
%$\nu_2(C_n-m_2!) = \nu_2 (n2^n)\geq n $ and $\nu_2(m_1!)< \nu(n2^n)<\nu_2((m_1!)^2)$.   
%\begin{align*}
 %   \text{For } m_1\leq10^4,\qquad v_2((m_1!)^2) \leq 19990 \qquad n\leq 19990
%\end{align*}
%By equation (\ref{m1m2s}) $v_p(C_n-m_2!)=\nu_p(m_1!-s)$.
First suppose $m_2\geq 2 $. Using  a short computer program, we obtain 
\begin{align*}
    & \nu_2(C_n-m_2!)=0, \qquad \nu_3(C_n-m_2!)\leq13, \\
    & \nu_5(C_n-m_2!)\leq 9, \qquad \nu_7(C_n-m_2!)\leq7.
\end{align*}
By Lemma \ref{vpfactorial} we have, 
\begin{align*}
    & \nu_2(m_1!) \geq 53, \qquad \nu_3(m_1!) \geq 26, \\
    & \nu_5(m_1!) \geq 13, \qquad \nu_7(m_1!) \geq 9.
\end{align*}
These, together with eq. \eqref{m1m2s} and Lemma \ref{padicid}, can be used to deduce that $v_p(C_n-m_2!)=\nu_p(m_1!+s) = \nu_p(s)$. Thus, $s=3^{v_3(C_n-m_2!)}5^{v_5(C_n-m_2!)}7^{v_7(C_n-m_2!)} $.

Since $\nu_3(m_1!)\geq 26$, we use the procedure described in the proof of Lemma \ref{p-1sol} to find possible values of $n$ such that $C_n-m_2!-s$ is divisible by $3^{26}$. (Here, we consider $C_n-m_2!-s\pmod{3^{30}}$). It turns out that $\nu_3(C_n-m_2!-s)\neq \nu_3(m_1!)$. So, eq. \eqref{m1m2s} has no solutions in this case.\\

Next, suppose $m_2=1$. Then \eqref{m1m2s} becomes $n2^n=m_1!+s$. Since $\nu_2(m_1!)<m_1<n\leq \nu_2(n2^n)$, we deduce that $\nu_2(m_1!)=v_2(s)=\nu_2(n2^n-s)$. Since $m_1\leq 10^4$, we get $\nu_2(n2^n-s)\nu_2(10^4!)=9995$. Now, let $p\in\{3,5,7\}$. For all $n<236899$, we have $\nu_p(n)<\nu_p(56!)\leq \nu_p(m_1!)$. Hence $\nu_p(n)=\nu_p(n2^n)=\nu_p(m_1!+s)=\nu_p(s)$. Therefore, $\nu_3(s)\leq 11$, $\nu_5(s)\leq7$ and $\nu_7(s)\leq 6$. As before, we use the procedure described in the proof of Lemma \ref{p-1sol} to find possible values of $n$ such that $n2^n-s$ is divisible by $3^{26}$. (Here, we consider $n2^n-s\pmod{3^{30}}$). We find that eq. \eqref{m1m2s} has no solution in this case either.\\

\noindent \textit{Case I(i)(c): Assume that $1\leq m_2\leq 48$ and $m_1 < 56$.} Then we have $n\leq 476.$ All such solutions in this case are given by 
\[    
\begin{aligned}
  &[n, m_1, m_2, s] \in \{[2, 3, 1, 2], [2, 3, 2, 1], [3, 2, 2, 21], [3, 3, 1, 18], [4, 4, 1, 40], [4, 4, 3, 35], \\
  &[5, 4, 2, 135], [5, 5, 1, 40], [5, 5, 3, 35], [6, 4, 1, 360], [7, 6, 2, 175], [8, 6, 3, 1323], \\
  &[9, 6, 1, 3888]\}.
\end{aligned}
\]
\noindent \textit{ Case I(ii): Assume $m_1> 10^4$.} Then we have $236899 \leq n \leq 10^{66}.$\\
\textit{Case I(ii)(a): Assume $m_2\geq 500$.} Then by Lemma \ref{padicid},  
$$ \nu_p(m_1!+m_2!)\geq \nu_p(m_2!)\geq \nu_p(500!)$$ for $p\in \{3,5,7\}$. Further, using Lemma \ref{vpfactorial}, $$\nu_3(m_1!+m_2!)\geq 247,\ \nu_5(m_1!+m_2!)\geq 124,\ \nu_7(m_1!+m_2!)\geq 82.$$ We apply Lemma \ref{njpj} with $p=3$, $N=10^{66}$ and $t=0$. Here, $p(p-1)=6$ and there are two residue classes modulo $6$ such that $3$ divides $n2^n+1$, namely $n_0 \in \{1,2\}$. For $n_0=1$, we get
%\begin{align*}
 %   & \nu_3(m_1!+m_2!)=\nu_3(m_2!)\geq \nu_3(500!)=247, \\
  %  & \nu_5(m_1!+m_2!)=\nu_5(m_2!)\geq \nu_5(500!)=124, \\
   % & \nu_7(m_1!+m_2!)=\nu_7(m_2!)\geq \nu_7(500!)=82.
%\end{align*}
\begin{align*}
     & n_{138}=2757614145106930270081057081158539402776859635842902126805823275421 ; \end{align*}
     and for $n_0=2$, we get
\begin{align*}
    & n_{138}=3748965004946665018258752266935970257963103092086460066359587819606.
\end{align*}
In both cases $n_{138}\geq N>n$, so $\nu_3(n2^n+1)<138$. Therefore, $\nu_3(s)<138$. In a similar manner, we find upper bounds for $\nu_p(s)$, when $p=5,7$. For $p=5$, we have $n_0 =3,4,6,17$. In these cases, we obtain 
\begin{align*}
    & n_{93} = 1244650605196477470301580667824245061531559793720522502019265072203; \\
    & n_{93} = 1795694848152108430374603592113726096902379193508535956140707676264; \\
    & n_{93} =  1358767469241923119082399935940451457976880852577230089606670816066;\\
    & n_{93} = 1924318815520452781692680587531291126323690582766162635381273157717, 
\end{align*}
respectively. Since $n_{93}\geq N>n$, we get $\nu_5(n2^n+1)<93$. Therefore, $\nu_5(s)<93$. When $p=7$, we have the cases $n_0 =5,6,10,26,27,31$ yielding 
\begin{align*}
    & n_{78} =23376667116957912273395168878053596583934978592913658754638298386469; \\
    & n_{78} = 26944746689754581236007271009151875823474002652201195796068635289134;\\
    & n_{78} = 24069582378334816208567848014057127858216459565384781083488608965992; \\
    & n_{78} = 6004003289610317916795511974189307812131311913908480006270103623040; \\
    & n_{78} = 9572082862406986879407614105287587051670335973196017047700440525705; \\
    & n_{78} =  6696918550987221851968191110192839086412792886379602335120414202563,
\end{align*}
respectively.  Therefore, $\nu_7(s)<78$. Now we calculate 
\[
    \max\{\nu_2(3^a5^b7^c-1) : 0 \leq a\leq 137,0\leq b \leq 92, 0 \leq c \leq 77\}. 
\]
We exclude the case where $a=b=c=0$ (since substituting $a=b=c=0 $ leads to $ n2^n=m_1!+m_2!$, but $\nu_2(m_1!+m_2!)\leq m_1<n\leq\nu_2(n2^n)$, a contradiction). We find that $\max\{\nu_2(3^a5^b7^c-1)\}\leq 20$. Since 
    \begin{align*}
    \nu_2(m_2!)&\geq \nu_2(500!) =494\\
    &>\max\{\nu_2(3^a5^b7^c-1): 0 \leq a\leq 137,0\leq b \leq 92, 0 \leq c \leq 77\},
    \end{align*}
we get 
    \[
   n\leq \nu_2(n2^n)=\nu_2(m_1!+m_2!+s-1)=\min(\nu_2(m_1!+m_2!),\nu_2(s-1))\leq 20<m_1,
    \]
contradicting Lemma \ref{L:ineq_nm1}. Hence, there is no solution in this case.\\ 

\noindent    \textit{Case(I)(ii)(b) Assume $2\leq m_2 < 500$.}  Then $s$ is odd.
By Lemma \ref{vpfactorial},
    \[
    \nu_3(m_1!)\geq 4996,\ \nu_5(m_1!)\geq 2499,\ \nu_7(m_1!)\geq 1665.
    \]
We now apply Lemma \ref{njpj} with $p=3$, $N=10^{66}$ and $t=m_2!$. In all cases, we find that $n_{140}>N>n$, so $\nu_3(C_n-m_2!)<140$. Likewise, for $p=5$, we obtain that $n_{110}>N>n$ in all cases implying that $\nu_5(C_n-m_2!)<110$ and, for $p=7$, we get $\nu_7(C_n-m_2!)<90$. We can rewrite eq. \eqref{m1m2s} as $C_n-m_2!=m_1!+s$. Since $\nu_p(C_n-m_2!)<\nu_p(m_1!)$ for $p\in \{3,5,7\}$, it follows from Lemma \ref{padicid} that $\nu_p(C_n-m_2!)=\nu_p(m_1!+s)=\nu_p(s)$ . Thus 
\[
s=3^a5^b7^c, \qquad 0\leq a\leq140,\ 0\leq b\leq110,\ 0\leq c\leq90.
\]
We can also rewrite eq. \eqref{m1m2s} as $n2^n+1-s=m_1!+m_2!$. For $s$ as above, we check computationally that $\nu_2(1-s)<n$. Hence, by Lemma \ref{padicid}, $\nu_2(n2^n+(1-s))=\nu_2(1-s)$. Further, $\nu_2(m_1!+m_2!)=\nu_2(m_2!)$. Therefore, $\nu_2(1-s)=\nu_2(m_2!)$.\\
Now we calculate 
\[
    \max\{\nu_2(3^a5^b7^c-1) : 0 \leq a\leq 140,0\leq b \leq 110, 0 \leq c \leq 90\}. 
\]
As in Case \textit{I(ii)(a)}, we can exclude the case where $a=b=c=0$. We find that $\max\{\nu_2(3^a5^b7^c-1) : 0 \leq a\leq 140,0\leq b \leq 110, 0 \leq c \leq 90\}\leq 20$. If $m_2>24$, then $\nu_2(m_2!)> \nu_2(24!) =22>\nu_2(1-s)$, which is a contradiction. Thus $m_2 \leq 24$.\\
Next, we rewrite eq. \eqref{m1m2s}) as $n2^n-m_1!=m_2!+(s-1)$. By computation, we find that $\nu_2(m_2!+s-1)\leq 25 $ but, by Lemma \ref{padicid}, $\nu_2(n2^n-m_1!)=\nu_2(m_1!)>\nu_2(10^4!)=9995$. So, there is no solution in this case.\\

\noindent    \textit{Case II: Assume $m_1!< \sqrt{C_n}$}, i.e. 
\begin{equation}\label{cn>m1}
        C_n> (m_1!)^2.
\end{equation}
We divide Case II into subcases: \\

\noindent     \textit{Case II(i): Assume $m_1\geq 560$.} \\

    % \textit{Case II(i)(a): Assume $m_2\geq 500$.}
\noindent    \textit{Case II(i)(a): Assume $m_2\neq 1$.} We proceed as in Case $I(ii)(a)$ if $m_2\geq 500$ and as in Case $I(ii)(b)$ if $1<m_2< 500$ and find that there are no solutions in this case.\\

% By an identical justification as for Case I(ii)(a), we conclude that there is no solution in this case.\\
%     \textit{Case II(i)(b): Assume $1<m_2 < 500$.}
% By an identical justification as for Case(I)(ii)(b), we conclude that there is no solution in this case.\\
\noindent    \textit{Case II(i)(b): Assume $m_2=1$.}
We can rewrite eq. \eqref{m1m2s} as $n2^n-m_1!=s$. By eq. \eqref{cn>m1}, we obtain $m_1!<s$. It follows from Lemmas \ref{vpfactorial} and \ref{L:ineq_nm1} that
\begin{align*}
    & \nu_2(m_1!) < m_1 <  n,  \qquad \qquad \quad \nu_3(m_1!) \geq \nu_3(560!)= 276, \\
    & \nu_5(m_1!) \geq \nu_5(560!) = 138, \qquad \nu_7(m_1!) \geq \nu_7(560!) = 92.
\end{align*}
Also we have, 
\begin{align*}
    & \nu_2(n2^n) \geq n,   \qquad \qquad \qquad \nu_3(n2^n) = \nu_3(n)\leq 140, \\
    & \nu_5(n2^n) = \nu_5(n) \leq 110,  \qquad \nu_7(n2^n) = \nu_7(n) \leq  80.
\end{align*}
So, by Lemma \ref{padicid}, we obtain $s=2^{\nu_2(m_1!)}3^{\nu_3(n)}5^{\nu_5(n)}7^{v_7(n)}<m_1!$, which is in contradiction to $m_1!<s$. So, we conclude that there is no solution in this case.\\

\noindent    \textit{Case II(ii): Assume $m_1 < 560$.}\\

\noindent    \textit{Case II(ii)(a): Assume $25\leq m_2<560$ and $m_1<560$.}

In this case $s$ is odd. We apply Lemma \ref{njpj} with $p=3$, $N= 10^{66}$ and $t=m_1!+m_2!$. In all cases, $n_{150}>N>n$. So, $\nu_3(n2^n+1-m_1!-m_2!)<150$. In a similar manner for $p=5,7$, we have $n_{110}>N>n$ and $n_{90}>N>n$, respectively. Therefore, $\nu_5(n2^n+1-m_1!-m_2!)<110$ and $\nu_7(n2^n+1-m_1!-m_2!)<90$. %in all cases $n_{110}>N>n$. So $\nu_5(n2^n+1-m_1!-m_2!)<110$. For $p=7$, in all cases $n_{90}>N>n$, so $\nu_7(n2^n+1-m_1!-m_2!)<90$. \\
We can rewrite eq. \eqref{m1m2s} as $C_n-m_1!-m_2!=s$. Thus, 
\[
s=3^a5^b7^c \  \  0\leq a\leq150,\ 0\leq b\leq110,\ 0\leq c\leq90.
\]
We rewrite eq. \eqref{m1m2s} as $n2^n+1-s=m_1!+m_2!$. It follows from Lemma \ref{padicid} that $\nu_2(n2^n+(1-s))=\nu_2(1-s)$ and $\nu_2(m_1!+m_2!)=\nu_2(m_2!)$ or $\nu_2(m_2!)+1$. So, $\nu_2(1-s)=\nu_2(m_2!)$ or $\nu_2(m_2!)+1$. By computation, we find that $\nu_2(s-1)\leq 20$. Thus, $m_2 \leq 24$. So, eq. \eqref{m1m2s} has no solution in this case.\\

\noindent    \textit{Case II(ii)(b): Assume $1\leq m_2<25$ and $244<m_1\leq 560$.}

We rewrite eq. \eqref{m1m2s} as $n2^n+1-m_1!-m_2!=s$. If $m_2\geq 2$ then $\nu_2(s)=\nu_2(n2^n+1-m_1!-m_2!)=0$. If $m_2=1$, then $\nu_2(s)=\nu_2(n2^n+1-m_1!-m_2!)=\nu_2(n2^n-m_1!)$. By Lemma \ref{L:ineq_nm1}, for $m_1 > 17$ we have $\nu_2(n2^n)>\nu_2(m_1!)$. So,
\[
\nu_2(s)=\nu_2(n2^n-m_1!)=\nu_2(m_1!)< 560.
\]
We rewrite eq. \eqref{m1m2s} as $n2^n+1-m_1!-m_2!=s$. For $p\in\{3,5,7\}$, $v_p(s)= \nu_p(n2^n+1-m_1!-m_2!)\leq \nu_p(m_1!)$. So, 
\[
\nu_3(s)\leq \nu_3(m_1!)<280 \qquad \nu_5(s)\leq \nu_5(m_1!)<140 \qquad \nu_7(s)\leq \nu_7(m_1!)<95.
\]
Hence, $s<2^{560}3^{280}5^{140}7^{95}$. By eq. \eqref{m1m2s}, 
\[
C_n - \sqrt{C_n}< C_n - m_1! = m_2!+s<25!+2^{560}3^{280}5^{140}7^{95}. 
\]
We therefore obtain $\sqrt{C_n}<25!+2^{560}3^{280}5^{140}7^{95}$. So $n\leq 3180$ and $m_1\leq 244$. Thus, we conclude that eq. \eqref{m1m2s} does not have a solution in this case.\\

\noindent    \textit{Case II(ii)(c): Assume $1\leq m_2<25$ and $122<m_1\leq 244$.}

By an argument similar to \textit{Case II(ii)(b)}, we obtain that
\begin{align*}
    & \nu_3(s)\leq \nu_3(m_1!)<121, \qquad \nu_5(s)\leq \nu_5(m_1!)<58, \\
    & \nu_7(s)\leq \nu_7(m_1!)<38,  \qquad \nu_2(s) < 244.
\end{align*}
Therefore, we have $n\leq 1343 $ and $m_1\leq 122$. Thus, we conclude that eq. \eqref{m1m2s} has no solution in this case.\\

\noindent    \textit{Case II(ii)(d): Assume $1\leq m_2<25$ and $69<m_1\leq 122$.}

By an argument similar to Case \textit{II(ii)(b)}, we find that
\begin{align*}
    & \nu_3(s)\leq \nu_3(m_1!)<58, \qquad \nu_5(s)\leq \nu_5(m_1!)<28, \\
    & \nu_7(s)\leq \nu_7(m_1!)<19,  \qquad \nu_2(s) < 122.
\end{align*}
Therefore, we have $n\leq 655 $ and $m_1\leq 69$. Therefore, there is no solution to eq. \eqref{m1m2s} in this case either.\\

\noindent    \textit{Case II(ii)(e): Assume $1\leq m_2<25$ and $1\leq m_1\leq 69$, $(m_1,m_2)\neq(2,1),(4,1)$}.
    
We now apply Lemma \ref{njpj} with $p=3$, $ N=10^{66}$ and $t=m_1!+m_2!$. In all cases, we find that $n_{150}>N>n$. So, $\nu_3(C_n-m_1!-m_2!)<150$. Similarly, for $p=5$, we obtain $n_{110}>N>n$ in all cases, which implies that $\nu_5(C_n-m_1!-m_2!)<110$ and, for $p=7$, we get $\nu_7(C_n-m_1!-m_2!)<90$. Also, $v_2(C_n-m_1!-m_2!)\leq\nu_2(69!)=66$. So, $$C_n\leq 69!+25!+2^{66}3^{150}5^{110}7^{90},$$ which implies that $n\leq 802$. All solutions in this case are given by
\[    
\begin{aligned}
  &[n, m_1, m_2, s] \in \{[1, 1, 1, 1], [2, 1, 1, 7], [2, 2, 2, 5], [3, 2, 2, 21], [4, 1, 1, 63], [6, 3, 1, 378]\}.
\end{aligned}
\]

\noindent    \textit{Case II(ii)(f): Assume  $(m_1,m_2)=(2,1),(4,1)$.}

We use the procedure described in the proof of Lemma \ref{p-1sol} to find possible values of $n$ such that (the non-zero integer) $C_n-m_1!-m_2!$ is divisible by $3^{140}$, $5^{99}$ and $7^{80}$. (Here, we consider $C_n-m_1!-m_2!\pmod{3^{200}},\pmod{5^{200}}, \pmod{7^{200}}$). It turns out that 
\begin{align*}
    & \nu_2(C_n-m_1!-m_2!)=\nu_2(s)\leq 2, \qquad \qquad \nu_3(C_n-m_1!-m_2!)=\nu_3(s)\leq 140, \\
    & \nu_5(C_n-m_1!-m_2!)=\nu_5(s)\leq 99, \qquad \quad \nu_7(C_n-m_1!-m_2!)=\nu_7(s)\leq 80.
\end{align*}
So, $$C_n\leq 4!+1!+2^{2}3^{140}5^{99}7^{80},$$ which implies that $n\leq 669$. Hence the only solution in this case is $[n, m_1, m_2, s]=[2, 2, 1, 6]$. This completes the proof of Theorem \ref{th:specfic_p}.

\end{proof}
%===============
%In \cite [Theorem 1.3]{LucaNoubissieCombinationfact}, for the case $n 2^n -1 = 1! +s $ with $n< 10^{58}$, if we apply Lemma \ref{njpj} than for $n_0 =0$ we obtain $n_{j-1}=1$ and $l_j = 0$ for all $p \in P$. So, we use the procedure described in the proof of Lemma \ref{p-1sol} to find possible values of $n<10^{58}$ such that (the non-zero integer) $n2^n-1-1!$ is divisible by $3^{125}$, $5^{88}$ and $7^{78}$. (Here, we consider $n2^n-2\pmod{3^{200}},\pmod{5^{200}}, \pmod{7^{200}}$). It turns out that 
%\begin{align*}
%    & \nu_2(n2^n-2)=\nu_2(s)\leq 1, \qquad \qquad \nu_3(n2^n-2)=\nu_3(s)\leq 125, \\
  %  & \nu_5(n2^n-2)=\nu_5(s)\leq 88, \qquad \quad \nu_7(n2^n-2)=\nu_7(s)\leq 78.
%\end{align*}
%So, $n2^n-1\leq 1!+2^{1}3^{125}5^{88}7^{78},$ which implies that $n\leq 613$. Hence the only solution of $n 2^n -1 = 1! +s $  is $2 \cdot 2^n-1 = 1!+ 6$.  

%============================
   \bibliographystyle{plain}
  \bibliography{ref}
\end{document}